\newcommand{\bracket}[1]{\ensuremath{\left[ #1 \right]}}
\newcommand{\braces}[1]{\ensuremath{\left\{ #1 \right\}}}
\newcommand{\refeqn}[1]{(\ref{eqn:#1})}
\newcommand{\reffig}[1]{Fig. \ref{fig:#1}}
\newcommand{\tr}[1]{\mbox{tr}\ensuremath{\negthickspace\bracket{#1}}}
\newcommand{\trs}[1]{\mbox{tr}\ensuremath{\!\bracket{#1}}}
\newcommand{\SO}{\ensuremath{\mathsf{SO(3)}}}
\newcommand{\so}{\ensuremath{\mathfrak{so}(3)}}
\renewcommand{\Re}{\ensuremath{\mathbb{R}}}
\newcommand{\Sph}{\ensuremath{\mathsf{S}}}
\newcommand{\D}{\ensuremath{\mathbf{D}}}
\renewcommand{\d}{\ensuremath{\mathbf{d}}}
\title{\LARGE \bf
Geometric Control of Multiple Quadrotor UAVs\\ Transporting a Cable-Suspended Rigid Body}
\author{Taeyoung Lee
\thanks{Taeyoung Lee, Mechanical and Aerospace Engineering, George Washington University, Washington, DC 20052 {\tt tylee@gwu.edu}}%
\thanks{This research has been supported in part by NSF under the grant CMMI-1243000 (transferred from 1029551), CMMI-1335008, and CNS-1337722.}
}
\newtheorem{prop}{Proposition}
\newtheorem{remark}{Remark}
\begin{document}
\allowdisplaybreaks

\maketitle \thispagestyle{empty} \pagestyle{empty}

\begin{abstract}
This paper is focused on tracking control for a rigid body payload, that is connected to an arbitrary number of quadrotor unmanned aerial vehicles via rigid links. An intrinsic form of the equations of motion is derived on the nonlinear configuration manifold, and a geometric controller is constructed such that the payload asymptotically follows a given desired trajectory for its position and attitude. The unique feature is that the coupled dynamics between the rigid body payload, links, and quadrotors are explicitly incorporated into control system design and stability analysis. These are developed in a coordinate-free fashion to avoid singularities and complexities that are associated with local parameterizations. The desirable features of the proposed control system are illustrated by a numerical example.
\end{abstract}

\section{Introduction}


Aerial transport of payloads by towed cables is common in various situations, such as emergency response, industrial, and military applications. Examples of aerial towing range from emergency rescue missions where individuals are lifted from dangerous situations to the delivery of heavy equipment to the top of a tall building.

Transportation of a cable-suspended load has been studied traditionally for helicopters~\cite{CicKanJAHS95,BerPICRA09}. Small unmanned aerial vehicles or quadrotors are also considered for load transportation and deployments~\cite{PalCruIRAM12,MicFinAR11,MazKonJIRS10}. However, these are based on simplified dynamics models. For example, the effects of the payload are considered as additional force and torque exerted to quadrotors, instead of considering the dynamic coupling between the payload and the quadrotor, and a pre-computed trajectory that minimizes swing motion of the payload is followed, instead of actively controlling the motion of payload and cable~\cite{MicFinAR11}. As such, these may not be suitable for agile load transportation where the motion of cable and payload should be actively suppressed online. 

Recently, geometric nonlinear control systems are developed for the complete dynamic model of a single quadrotor transporting a cable-suspended load~\cite{SreLeePICDC13}, and for multiple quadrotors transporting a common payload cooperatively~\cite{LeeSrePICDC13}. It is also generalized for a quadrotor with a payload connected by flexible cable that is modeled as a serially-connected links, to incorporate the deformation of cable~\cite{GooLeePACC14}. However, in these results, it is assumed that the payload is modeled by a point mass. Such assumption is quite restrictive for practical cases where the size of the payload is comparable to the quadrotors and the length of cables.

The objective of this paper is to construct a control system for an arbitrary number of quadrotors connected to a rigid body payload via rigid links. This is challenging in the sense that \textit{dynamically coupled} quadrotors should cooperate safely to transport a rigid body. This is in contrast to the existing results on formation control of decoupled multi-agent systems.

\begin{figure}
\centerline{\setlength{\unitlength}{0.1\columnwidth}\footnotesize
\begin{picture}(10,6)(0.0,0)
\put(1.25,0){\includegraphics[width=0.75\columnwidth]{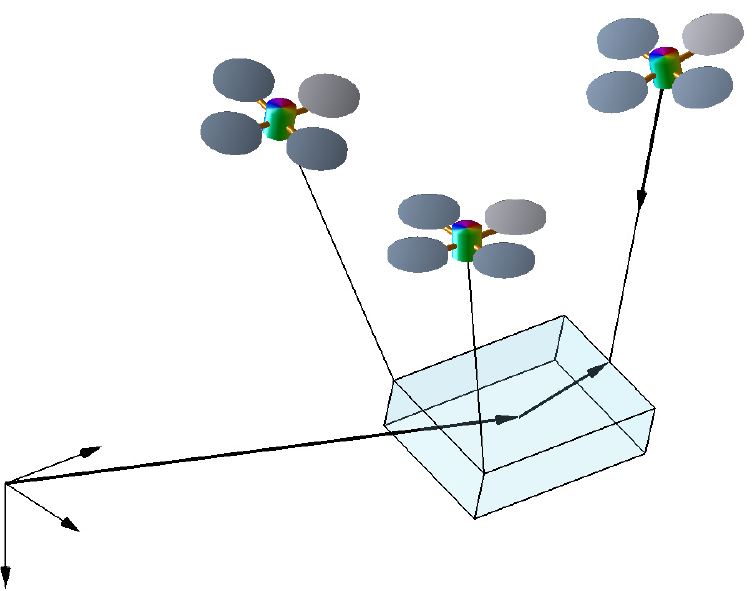}}
\put(2.35,1.45){$\vec e_1$}
\put(1.9,0.3){$\vec e_2$}
\put(1.4,-0.1){$\vec e_3$}
\put(3.1,1.0){$x_0\in\Re^3$}
\put(6.7,0.5){\shortstack[c]{$m_0,J_0$\\$R_0\in\SO$}}
\put(5.3,4.9){\shortstack[c]{$m_i,J_i$\\$R_i\in\SO$}}
\put(7.8,4.2){$q_i\in\Sph^2$}
\put(6.8,1.8){$\rho_i$}
\put(7.65,3.3){$l_i$}
\end{picture}}
\caption{Dynamics model: $n$ quadrotors are connect to a rigid body $m_0$ via massless links $l_i$. The configuration manifold is $\Re^3\times\SO\times(\Sph^2\times\SO)^n$.}\label{fig:DM}
\vspace*{-0.1cm}
\end{figure}

In this paper, a coordinate-free form of the equations of motion is derived according to Lagrange mechanics on a nonlinear manifold, and a geometric control system is designed such that the rigid body payload exponentially follows a given desired trajectory of both the payload position and attitude. The unique property of the proposed control system is that the nontrivial coupling effects between the dynamics of payload, cable, and multiple quadrotors are explicitly incorporated into control system design, without any simplifying assumption. Another distinct feature is that the equations of motion and the control systems are developed directly on the nonlinear configuration manifold intrinsically. Therefore, singularities of local parameterization are completely avoided to generate agile maneuvers of the payload in a uniform way. In short, the proposed control system is particularly useful for rapid and safe payload transportation in complex terrain, where the position and attitude of the payload should be controlled concurrently in a fast manner to avoid collision with obstacles. 

This paper is organized as follows. A dynamic model is presented and the problem is formulated at Section \ref{sec:DM}. Control systems are constructed at Sections \ref{sec:SDM} and \ref{sec:FDM}, which are followed by a numerical example. Due to the page limit, parts of proofs are relegated to~\cite{LeeArXiv14a}.

\section{Problem Formulation}\label{sec:DM}

Consider $n$ quadrotor UAVs that are connected to a payload, that is modeled as a rigid body, via massless links (see Figure \ref{fig:DM}). Throughout this paper, the variables related to the payload is denoted by the subscript $0$, and the variables for the $i$-th quadrotor are denoted by the subscript $i$, which is assumed to be an element of $\mathcal{I}=\{1,\cdots\, n\}$ if not specified. We choose an inertial reference frame $\{\vec e_1,\vec e_2,\vec e_3\}$ and body-fixed frames $\{\vec b_{j_1},\vec b_{j_2},\vec b_{j_3}\}$ for $0\leq j\leq n$ as follows. For the inertial frame, the third axis $\vec e_3$ points downward along the gravity and the other axes are chosen to form an orthonormal frame. The origin of the $j$-th body-fixed frame is located at the center of mass of the payload for $j=0$ and at the mass center the quadrotor for $1\leq j\leq n$. The third body-fixed axis $\vec b_{i_3}$ is normal to the plane defined by the centers of rotors, and it points downward.

The location of the mass center of the payload is denoted by $x_0\in\Re^3$, and its attitude is given by $R_0\in\SO$, where the special orthogonal group is defined by $\SO=\{R\in\Re^{3\times 3}\,|\, R^TR=I,\,\mathrm{det}[R]=1\}$. Let $\rho_i\in\Re^3$ be the point on the payload where the $i$-th link is attached, and it is represented with respect to the zeroth body-fixed frame. The other end of the link is attached to the mass center of the $i$-th quadrotor. The direction of the link from the mass center of the $i$-th quadrotor toward the payload is defined by the unit-vector $q_i\in\Sph^2$, where $\Sph^2=\{q\in\Re^3\,|\,\|q\|=1\}$, and the length of the $i$-th link is denoted by $l_i\in\Re$.

Let $x_i\in\Re^3$ be the location of the mass center of the $i$-th quadrotor with respect to the inertial frame. As the link is assumed to be rigid, we have $x_i=x_0+R_0\rho_i-l_i q_i$. The attitude of the $i$-th quadrotor is defined by $R_i\in\SO$, which represents the linear transformation of the representation of a vector from the $i$-th body-fixed frame to the inertial frame. The corresponding configuration manifold of this system is $\Re^3\times\SO\times(\Sph^2\times \SO)^n$.

The mass and the inertia matrix of the payload are denoted by $m_0\in\Re$ and $J_0\in\Re^{3\times 3}$, respectively. The dynamic model of each quadrotor is identical to~\cite{LeeLeoPICDC10}. The mass and the inertia matrix of the $i$-th quadrotor are denoted by $m_i\in\Re$ and $J_i\in\Re^{3\times 3}$, respectively. The $i$-th quadrotor can generates a thrust $-f_iR_ie_3\in\Re^3$ with respect to the inertial frame, where $f_i\in\Re$ is the total thrust magnitude and $e_3=[0,0,1]^T\in\Re^3$. It also generates a moment $M_i\in\Re^3$ with respect to its body-fixed frame. The control input of this system corresponds to $\{f_i,M_i\}_{1\leq i\leq n}$.

Throughout this paper, the 2-norm of a matrix $A$ is denoted by $\|A\|$, and its maximum eigenvalue and minimum eigenvalues are denoted by $\lambda_{M}[A]$ and $\lambda_{m}[A]$, respectively. The standard dot product is denoted by $x \cdot y = x^Ty$ for any $x,y\in\Re^3$.

\subsection{Equations of Motion}

The kinematic equations for the payload, quadrotors, and links are given by
\begin{gather}
\dot q_{i} = \omega_i\times q_i=\hat\omega_i q_i,\label{eqn:dotqi}\\
\dot R_0  = R_0\hat\Omega_0,\quad \dot R_i  = R_i\hat\Omega_i,\label{eqn:dotRi}
\end{gather}
where $\omega_i\in\Re^3$ is the angular velocity of the $i$-th link, satisfying $q_i\cdot\omega_i=0$, and $\Omega_0$ and $\Omega_i\in\Re^3$ are the angular velocities of the payload and the $i$-th quadrotor expressed with respect to its body-fixed frame, respectively. The \textit{hat map} $\hat\cdot:\Re^3\rightarrow\so$ is defined by the condition that $\hat x y=x\times y$ for all $x,y\in\Re^3$, and the inverse of the hat map is denoted by the \textit{vee map} $\vee:\so\rightarrow\Re^3$. 

We derive equations of motion according to Lagrangian mechanics. The velocity of the $i$-th quadrotor is given by $\dot x_i = \dot x_0+\dot R_0\rho_i - l_i\dot q_i$. The kinetic energy of the system is composed of the translational kinetic energy and the rotational kinetic energy of the payload and quadrotors:
\begin{align}
\mathcal{T} & = \frac{1}{2}m_0 \|\dot x_0\|^2 + \frac{1}{2}\Omega_0\cdot J_0\Omega_0\nonumber\\
&\quad +\sum_{i=1}^n \frac{1}{2} m_i \|\dot x_0+\dot R_0\rho_i - l_i\dot q_i\|^2+ \frac{1}{2}\Omega_i\cdot J_i\Omega_i.\label{eqn:TT}
\end{align}
The gravitational potential energy is given by
\begin{align}
\mathcal{U} = - m_0g e_3 \cdot x_0 - \sum_{i=1}^n m_ige_3\cdot (x_0+R_0\rho_i-l_iq_i),\label{eqn:UU}
\end{align}
where it is assumed that the unit-vector $e_3$ points downward along the gravitational acceleration as shown at \reffig{DM}. The corresponding Lagrangian of the system is $\mathcal{L}=\mathcal{T}-\mathcal{U}$. 

Coordinate-free form of Lagrangian mechanics on the two-sphere $\Sph^2$ and the special orthogonal group $\SO$ for various multibody systems has been studied in~\cite{Lee08,LeeLeoIJNME08}. The key idea is representing the infinitesimal variation of $q_i\in\Sph^2$ in terms of the exponential map:
\begin{align}
\delta q_i = \frac{d}{d\epsilon}\bigg|_{\epsilon = 0} \exp (\epsilon \hat\xi_i) q_i = \xi_i\times q_i,\label{eqn:delqi}
\end{align}
for a vector $\xi_i\in\Re^3$ with $\xi_i\cdot q_i=0$. Similarly, the variation of $R_i$ is given by $\delta R_i  = R_i\hat\eta_i$ for $\eta_i\in\Re^3$. 

By using these expressions, the equations of motion can be obtained from Hamilton's principle as follows (see Appendix \ref{sec:EOM} for more detailed derivations).
\begin{gather}
M_q(\ddot x_0-ge_3) - \sum_{i=1}^n m_iq_iq_i^T R_0\hat\rho_i\dot\Omega_0\nonumber\\
  = \sum_{i=1}^n u_i^\parallel-m_il_i\|\omega_i\|^2q_i- m_iq_iq_i^T R_0\hat\Omega_0^2\rho_i,\label{eqn:ddotx0}\\
  (J_0-\sum_{i=1}^n m_i\hat\rho_i R_0^T q_iq_i^T R_0\hat\rho_i)\dot\Omega_0 
+ \sum_{i=1}^n m_i\hat\rho_i R_0^Tq_iq_i^T(\ddot x_0-ge_3)\nonumber \\ + \hat\Omega_0   J_0\Omega_0
=\sum_{i=1}^n \hat\rho_i R_0^T (u_i^\parallel-m_il_i\|\omega_i\|^2 q_i-m_iq_iq_i^TR_0\hat\Omega_0^2\rho_i),\label{eqn:dotW0}\\
\dot\omega_i =\frac{1}{l_i}\hat q_i(\ddot x_0-ge_3 - R_0\hat\rho_i\dot\Omega_0+
R_0\hat\Omega_0^2\rho_i)
 -\frac{1}{m_il_i}\hat q_iu_i^\perp,\label{eqn:dotwi}\\
J_i\dot\Omega_i + \Omega_i\times J_i\Omega_i = M_i,\label{eqn:dotWi}
\end{gather}
where $M_q=m_y I + \sum_{i=1}^n m_i q_i q_i^T\in\Re^{3\times 3}$, which is symmetric, positive-definite for any $q_i$.

The vector $u_i\in\Re^3$ represents the control force at the $i$-th quadrotor, i.e., $u_i=-f_i R_i e_3$.  The vectors $u_i^\parallel$ and $u_i^\perp\in\Re^3$ denote the orthogonal projection of $u_i$ along $q_i$, and the orthogonal projection of $u_i$ to the plane  normal to $q_i$, respectively, i.e.,
\begin{align}
u_i^\parallel &= (I+\hat q_i^2) u_i = (q_i\cdot u_i) q_i = q_iq_i^T u_i,\\
u_i^\perp &= -\hat q_i^2 u_i = -q_i \times (q_i \times u_i)=(I-q_iq_i^T) u_i.
\end{align}
Therefore, $u_i = u_i^\parallel + u_i^\perp$.

\subsection{Tracking Problem}

Define a matrix $\mathcal{P} \in \Re^{6\times 3n}$ as 
\begin{align}
\mathcal{P} = \begin{bmatrix} I_{3\times 3} & \cdots & I_{3\times 3} \\ \hat\rho_1 & \cdots & \hat\rho_n \end{bmatrix}. \label{eqn:P}
\end{align}
Assume the links are attached to the payload such that
\begin{align}
\mathrm{rank}[\mathcal{P}] \geq 6.\label{eqn:A1}
\end{align}
This is to guarantee that there exist enough degrees of freedom in control inputs for both the translational motion and the rotational maneuver of the payload. The assumption \refeqn{A1} requires that the number of quadrotor is at least three, i.e., $n\geq 3$, since when $n=2$ the above matrix $\mathcal{P}$ has a non-empty null space spanned by $[(\rho_1-\rho_2)^T, (\rho_2-\rho_1)^T]^T$. This follows from the fact that it is impossible to generate any moment to the payload along the direction of $\rho_1-\rho_2$ when $n=2$. 

Suppose that the desired trajectories for the position and the attitude of the payload are given as smooth curves, namely $x_{0_d}(t)\in\Re^3$ and $R_{0_d}(t)\in\SO$ during a time period. From the attitude kinematics equation, we have
\begin{align}
\dot R_{0_d} = R_{0_d} \hat\Omega_{0_d},
\end{align}
where $\Omega_{0_d}\in\Re^3$ corresponds to the desired angular velocity. It is assumed that the velocity and the acceleration of the desired trajectories are bounded by known constants.

We wish to design a control input of each quadrotor $\{f_i,M_i\}_{1\leq i\leq n}$ such that the state of zero tracking errors becomes an asymptotically stable equilibrium of the controlled system. 

\section{Control System Design For Simplified Dynamic Model}\label{sec:SDM}

In this section, we consider a simplified dynamic model where the attitude dynamics of each quadrotor is ignored, and we design a control input by assuming that the thrust at each quadrotor, namely $u_i$ can be arbitrarily chosen. It corresponds to the case where each quadrotor is replaced by a fully actuated vehicle that can generates a thrust arbitrarily. The effects of the attitude dynamics of quadrotors will be incorporated in the next section. 

In the simplified dynamic model given by \refeqn{ddotx0}-\refeqn{dotwi}, the dynamics of the payload are affected by the parallel components $u_i^\parallel$ of the control inputs, and the dynamics of the links are directly affected by the normal components $u_i^\perp$ of the control inputs. This motivates the following control system design procedure: first, the parallel components $u_i^\parallel$ are chosen such that the payload follows the desired position and attitude trajectory while yielding the desired direction of each link, namely $q_{i_d}$; next, the normal components $u_i^\perp$ are designed such that the actual direction of the links $q_i$ follows $q_{i_d}$.

\subsection{Design of Parallel Components}

Let $a_i\in\Re^3$ be the acceleration of the point on the payload where the $i$-th link is attached, relative to the gravitational acceleration:
\begin{align}
a_i = \ddot x_0 - ge_3 + R_0\hat\Omega_0^2\rho_i -R_0\hat\rho_i\dot\Omega_0.\label{eqn:ai0}
\end{align}
The parallel component of the control input is chosen as
\begin{align}
u_i^\parallel & = \mu_i + m_il_i\|\omega_i\|^2q_i + m_i q_iq_i^T a_i,\label{eqn:uip}
\end{align}
where $\mu_i\in\Re^3$ is a virtual control input that is designed later, with a constraint that $\mu_i$ is parallel to $q_i$. Note that the expression of $u_i^\parallel$ is guaranteed to be parallel to $q_i$ due to the projection operator $q_iq_i^T$ at the last term of the right-hand side of the above expression.

The motivation for the proposed parallel components becomes clear if \refeqn{uip} is substituted into \refeqn{ddotx0}-\refeqn{dotW0} and rearranged to obtain
\begin{gather}
m_0 (\ddot x_0 -g e_3) = \sum_{i=1}^n \mu_i,\label{eqn:ddotx0s}\\
J_0\dot\Omega_0 +\hat\Omega_0 J_0\Omega_0 = \sum_{i=1}^n \hat\rho_i R_0^T \mu_i.\label{eqn:dotW0s}
\end{gather}
Therefore, considering a free-body diagram of the payload, it is clear that the virtual control input $\mu_i$ corresponds to the force exerted to the payload by the $i$-link, namely the tension of the $i$-th link. When there is no control force from each quadrotor, i.e., $u_i^\parallel=0$, the tension of the $i$-th link is composed of the projected relative inertial force at the point where the $i$-th link is attached to the payload and the centrifugal force due to the rotation of the link. Substituting \refeqn{ddotx0s} and \refeqn{dotW0s} back into \refeqn{ai0}, we obtain
\begin{align}
a_i & = \frac{1}{m_0}\sum_{j=1}^n\mu_j+R_0\hat\Omega_0^2\rho_i\nonumber\\
&\quad + R_0\hat\rho_iJ_0^{-1}(\hat\Omega_0 J_0\Omega_0 - \sum_{j=1}^n \hat\rho_j R_0^T\mu_j).\label{eqn:ai}
\end{align}

Next, we determine the virtual control input $\mu_i$. Any control scheme developed for the translational and rotational dynamics of a rigid body can be applied to \refeqn{ddotx0s} and \refeqn{dotW0s}. Here, we consider a proportional-derivative type nonlinear controller studied in~\cite{GooLeePECC13}. Define position, attitude, and angular velocity tracking error vectors $e_{x_0},e_{R_0},e_{\Omega_0}\in\Re^3$ for the payload as
\begin{align}
e_{x_0} & = x_0 -x_{0_d},\\
e_{R_0} & = \frac{1}{2} (R_{0_d}^T R_0-R_0^T R_{0_d})^\vee,\\
e_{\Omega_0} & = \Omega_0 - R_0^T R_{0_d}\Omega_{0_d}.
\end{align}
The desired resultant control force $F_d\in\Re^3$ and moment $M_d\in\Re^3$ acting on the payload are given in term of these error variables as
\begin{align}
F_d & = m_0 (-k_{x_0}e_{x_0} - k_{\dot x_0} \dot e_{x_0} + \ddot x_{0_d} - g e_3),\label{eqn:Fd}\\
M_d & = -k_{R_0} e_{R_0} - k_{\Omega_0} e_{\Omega_0}  \nonumber\\
& \quad +(R_0^TR_{0_d}\Omega_{0_d})^\wedge J_0 R_0^TR_{0_d} \Omega_{0_d} + J_0 R_0^T R_{0_d} \dot\Omega_{0_d},\label{eqn:Md}
\end{align}
for positive constants $k_{x_0},k_{\dot x_0},k_{R_0},k_{\Omega_0}\in\Re$.

One may try to choose the virtual control input by making the expressions in the right-hand sides of \refeqn{ddotx0s} and \refeqn{dotW0s} identical to $F_d$ and $M_d$, respectively. But, this is not valid as each $\mu_i$ is constrained to be parallel to $q_i$. Instead, we choose the desired value of $\mu_i$, without any constraint, such that 
\begin{align}
\sum_{i=1}^n \mu_{i_d} = F_d,\quad \sum_{i=1}^n \hat\rho_i R_0^T \mu_{i_d} = M_d,\label{eqn:muid0}
\end{align}
or equivalently, using the matrix $\mathcal{P}$ defined at \refeqn{P},
\begin{align*}
\mathcal{P}
\begin{bmatrix}
R_0^T\mu_{1_d} \\ \vdots \\ R_0^T\mu_{n_d}
\end{bmatrix}
=
\begin{bmatrix}
R_0^T F_d \\ M_d
\end{bmatrix}.
\end{align*}
From the assumption stated at \refeqn{A1}, there exists at least one solution to the above matrix equation for any $F_d, M_d$. Here, we find the minimum-norm solution given by
\begin{align}
\begin{bmatrix} \mu_{1_d}\\\vdots\\\mu_{n_d} \end{bmatrix} 
= \mathrm{diag}[R_0,\cdots R_0]\;\mathcal{P}^T (\mathcal{P}\mathcal{P}^T)^{-1}\begin{bmatrix} R_0^T F_d\\M_d\end{bmatrix}.\label{eqn:muid}
\end{align}
The virtual control input $\mu_i$ is selected as the projection of its desired value $\mu_{i_d}$ along $q_i$,
\begin{align}
\mu_i = (\mu_{i_d}\cdot q_i) q_i=q_iq_i^T\mu_{i_d},\label{eqn:mui}
\end{align}
and the desired direction of each link, namely $q_{i_d}\in\Sph^2$ is defined as
\begin{align}
q_{i_d} = -\frac{\mu_{i_d}}{\|\mu_{i_d}\|}.\label{eqn:qid}
\end{align}
It is straightforward to verify that when $q_{i}=q_{i_d}$, the resultant force and moment acting on the payload become identical to their desired values.

Here, the extra degrees of freedom in control inputs are used to minimize the magnitude of the desired tension at \refeqn{muid}, but they can be applied to other tasks, such as controlling the relative configuration of links~\cite{LeeSrePICDC13}. This is referred to future investigation.

\subsection{Design of Normal Components}

Substituting \refeqn{ai0} into \refeqn{dotwi}, the equation of motion for the $i$-link is given by
\begin{align}
\dot\omega_i & = \frac{1}{l_i}\hat q_i a_i-\frac{1}{m_il_i}\hat q_i u_i^\perp.\label{eqn:widotf0}
\end{align}
Here, the normal component of the control input $u_i^\perp$ is chosen such that $q_i\rightarrow q_{i_d}$ as $t\rightarrow\infty$. Control systems for the unit-vectors on the two-sphere have been studied in~\cite{BulLew05,Wu12}. In this paper, we apply a control system developed in terms of the angular velocity in~\cite{Wu12}. For the given desired direction of each link, its desired angular velocity is obtained from the kinematics equation as
\begin{align}
\omega_{i_d} = q_{i_d}\times \dot q_{i_d}.\label{eqn:wid}
\end{align}
Define the direction and the angular velocity tracking error vectors for the $i$-th link, namely $e_{q_i},e_{\omega_i}\in\Re^3$ as 
\begin{align}
e_{q_i} & = q_{i_d}\times q_i,\\
e_{\omega_i} & = \omega_i + \hat q_i^2\omega_{i_d}.
\end{align}
For positive constants $k_{q},k_{\omega}\in\Re$, the normal component of the control input is chosen as
\begin{align}
u_i^\perp & = m_il_i\hat q_i \{-k_q e_{q_i} -k_{\omega}e_{\omega_i} -(q_i\cdot\omega_{i_d})\dot q_i -\hat q_i^2\dot\omega_d\}\nonumber\\
&\quad - m_i\hat q_i^2 a_i.\label{eqn:uiperp}
\end{align}
Note that the expression of $u_i^\perp$ is perpendicular to $q_i$ by definition. Substituting \refeqn{uiperp} into \refeqn{widotf0}, and rearranging by the facts that the matrix $-\hat q_i^2$ corresponds to the orthogonal projection to the plane normal to $q_i$ and $\hat q_i^3=-\hat q_i$, we obtain
\begin{align}
\dot\omega_i & = -k_q e_{q_i} -k_{\omega}e_{\omega_i} -(q_i\cdot\omega_{i_d})\dot q_i -\hat q_i^2\dot\omega_d.\label{eqn:dotwif}
\end{align}

In short, the control force for the simplified dynamic model is given by
\begin{align}
u_i = u_i^\parallel + u_i^\perp.\label{eqn:ui}
\end{align}
The resulting stability properties are summarized as follows.

\begin{prop}\label{prop:SDM}
Consider the simplified dynamic model defined by \refeqn{ddotx0}-\refeqn{dotwi}. For given tracking commands $x_{0_d},R_{0_d}$, a control input is designed as \refeqn{ui}. Then, there exist the values of controller gains, $k_{x_0},k_{\dot x_0},k_{R_0},k_{\Omega_0}, k_{q},k_{\omega}$ such that the zero equilibrium of tracking errors $(e_{x_0},\dot e_{x_0},e_{R_0},e_{\Omega_0},e_{q_i},e_{\omega_i})$ is exponentially stable. 
\end{prop}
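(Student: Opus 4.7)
The plan is to exploit the cascade structure that the controller design induces: the normal components $u_i^\perp$ drive each link direction $q_i$ toward $q_{i_d}$ independently, while the parallel components, evaluated at $q_i=q_{i_d}$, drive the payload state $(x_0,R_0)$ toward $(x_{0_d},R_{0_d})$ exactly as if the desired force $F_d$ and moment $M_d$ were applied directly. I would set up a composite Lyapunov function on the full error state $(e_{x_0},\dot e_{x_0},e_{R_0},e_{\Omega_0},e_{q_i},e_{\omega_i})$ and show negative definiteness of its derivative in a sublevel set.

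First I would analyze the link subsystem in isolation. Substituting $u_i^\perp$ from \refeqn{uiperp} into \refeqn{widotf0} yields the clean closed-loop dynamics \refeqn{dotwif}, which is a standard tracking system on $\Sph^2$. Following the construction of~\cite{Wu12}, I would use a Lyapunov function of the form $V_{q_i}=\tfrac{1}{2}\|e_{\omega_i}\|^2+k_q(1-q_{i_d}\cdot q_i)+c_q e_{q_i}\cdot e_{\omega_i}$, where $c_q>0$ is a small cross-term coefficient. On a sublevel set where $q_{i_d}\cdot q_i>0$, this function is bounded above and below by quadratic forms in $(\|e_{q_i}\|,\|e_{\omega_i}\|)$, and its derivative along \refeqn{dotwif} is negative definite provided $c_q$ is chosen small relative to $k_q,k_\omega$. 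This gives local exponential decay of the link errors, treating $q_{i_d}$ and its derivatives as bounded exogenous signals.

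Second I would analyze the payload subsystem. When $q_i=q_{i_d}$, projection gives $\mu_i=\mu_{i_d}$, so by \refeqn{ddotx0s} and \refeqn{dotW0s} the payload sees exactly $F_d$ and $M_d$, and the PD-type laws \refeqn{Fd}, \refeqn{Md} are standard geometric tracking controllers on $\Re^3\times\SO$ with well-known exponentially decreasing Lyapunov functions $V_x=\tfrac{1}{2}k_{x_0}\|e_{x_0}\|^2+\tfrac{1}{2}\|\dot e_{x_0}\|^2+c_x e_{x_0}\cdot\dot e_{x_0}$ and $V_R=\tfrac{1}{2}e_{\Omega_0}\cdot J_0 e_{\Omega_0}+k_{R_0}\Psi(R_0,R_{0_d})+c_R e_{R_0}\cdot e_{\Omega_0}$, where $\Psi$ is the standard attitude error function $\tfrac{1}{2}\mathrm{tr}[I-R_{0_d}^TR_0]$. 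When $q_i\neq q_{i_d}$ the actual inputs acting on the payload differ from $(F_d,M_d)$ by a perturbation $\Delta_i=(q_iq_i^T-q_{i_d}q_{i_d}^T)\mu_{i_d}$ that, using $q_{i_d}\parallel\mu_{i_d}$, can be bounded by $\|\Delta_i\|\leq C\|e_{q_i}\|\|\mu_{i_d}\|$ for some $C>0$. Since $\|\mu_{i_d}\|$ is bounded by a linear function of the payload error magnitudes and the bounded desired acceleration, this gives a perturbation linear in $\|e_{q_i}\|$ modulo small state-dependent cross terms.

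Finally I would form $V=V_x+V_R+\sum_i V_{q_i}$ and bound $\dot V$ by $-W_1(\|z_{\mathrm{pld}}\|)-\sum_i W_2(\|z_{\mathrm{lnk},i}\|)+\sum_i C'\|z_{\mathrm{pld}}\|\|e_{q_i}\|$, where $z_{\mathrm{pld}},z_{\mathrm{lnk},i}$ collect the payload and $i$th link error variables and $W_1,W_2$ are positive-definite quadratic forms. By choosing $k_q,k_\omega$ large enough relative to the payload gains and the cross-coupling constants, the indefinite cross term is dominated by the quadratic decay terms and $\dot V$ becomes negative definite on a neighborhood of zero; Young's inequality then converts this to exponential decay of $V$, and hence of the error vector. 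The main obstacle I anticipate is quantitatively bounding the state- and gain-dependent coupling coefficient $C'$: the term $a_i$ in \refeqn{ai} feeds back the payload acceleration (which itself depends on all the $\mu_j$, and hence on all link attitudes) into the link control law, so one must verify that the map from error state to $\|\mu_{i_d}\|$ and $\|\dot q_{i_d}\|$ remains Lipschitz on a sublevel set where $\mathrm{rank}[\mathcal{P}]\geq 6$ keeps $(\mathcal{P}\mathcal{P}^T)^{-1}$ uniformly bounded; this justifies restricting attention to a small enough neighborhood of the equilibrium and selecting the controller gains in the order $(k_q,k_\omega)\gg(k_{R_0},k_{\Omega_0})\gg(k_{x_0},k_{\dot x_0})$ so that the faster link dynamics render the payload loop effectively decoupled.
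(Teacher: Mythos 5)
Your proposal follows essentially the same route as the paper's proof: a composite Lyapunov function with cross terms $c_xe_{x_0}\cdot\dot e_{x_0}$, $c_Re_{R_0}\cdot J_0e_{\Omega_0}$, $c_qe_{q_i}\cdot e_{\omega_i}$, the observation that the payload sees $F_d,M_d$ plus a perturbation $(q_iq_i^T-I)\mu_{i_d}$ bounded linearly in $\|e_{q_i}\|$ times the payload errors plus a constant, and a final quadratic-form domination argument. The only cosmetic difference is that the paper closes the argument by restricting to a domain where $\Psi_{q_i}<\psi_{q_i}$ is small enough that $n\alpha_i\beta<1$ (plus small $c_x,c_R,c_q$) rather than by your proposed gain ordering, but both mechanisms live inside the same matrix-positivity condition, so the proof is sound.
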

\begin{proof}
See Appendix \ref{sec:prfSDM}
\end{proof}

\begin{remark}
At \refeqn{qid}, the negative sign appeared to make the tension at each cable positive when $q_i=q_{i_d}$.  Assuming that the tracking errors $e_{x_0},\dot e_{x_0},e_{R_0},e_{\Omega_0}$ and the variables $\ddot x_{0_d},\Omega_{0_d},\dot\Omega_{0_d}$ obtained from the desired trajectories are sufficiently small, this guarantees that quadrotors remain above the payload. If desired, the negative sign at \refeqn{qid} can be eliminated to place quadrotors below the payload, resulting in a tracking control of an inverted rigid body multi-link pendulum, that can be considered as a generalization of a flying spherical  inverted spherical pendulum~\cite{LeeSrePICDC13}. 
\end{remark}

\section{Control System Design for Full Dynamic Model}\label{sec:FDM}

The control system designed at the previous section is based on a simplifying assumption that each quadrotor can generates a thrust along any direction. However, the dynamics of quadrotor is underactuated since the direction of the total thrust is always parallel to its third body-fixed axis, while the magnitude of the total thrust can be arbitrarily changed. This can be directly observed from the expression of the total thrust, $u_i = -f_i R_i e_3$, where $f_i$ is the total thrust magnitude, and $R_ie_3$ corresponds to the direction of the third body-fixed axis. Whereas, the rotational attitude dynamics is fully actuated by the arbitrary control moment $M_i$.

Based on these observations, the attitude of each quadrotor is controlled such that the third body-fixed axis becomes parallel to the direction of the ideal control force $u_i$ designed in the previous section. The desired direction of the third body-fixed axis of the $i$-th quadrotor, namely $b_{3_i}\in\Sph^2$ is given by
\begin{align}
b_{3_i} = -\frac{u_i}{\|u_i\|}.\label{eqn:b3i}
\end{align}
This provides two-dimensional constraint on the three-dimensional desired attitude of each quadrotor, and there remains one degree of freedom. To resolve it, the desired direction of the first body-fixed axis $b_{1_i}(t)\in\Sph^2$ is introduced as a smooth function of time. Due to the fact that the first body-fixed axis is normal to the third body-fixed axis, it is impossible to follow an arbitrary command $b_{1_i}(t)$ exactly. Instead, its projection onto the plane normal to $b_{3_i}$ is followed, and the desired direction of the second body-fixed axis is chosen to constitute an orthonormal frame~\cite{LeeLeoPICDC10}. More explicitly, the desired attitude of the $i$-th quadrotor is given by
\begin{align}
R_{i_c} = \begin{bmatrix}-\frac{(\hat b_{3_i})^2 b_{1_i}}{\|(\hat b_{3_i})^2 b_{1_i}\|}, &
\frac{\hat b_{3_i}b_{1_i}}{\|\hat b_{3_i}b_{1_i}\|},& b_{3_i}\end{bmatrix},
\end{align}
which is guaranteed to be an element of $\SO$. The desired angular velocity is obtained from the attitude kinematics equation, $\Omega_{i_c} = (R_{i_c}^T\dot R_{i_c})^\vee\in\Re^3$.  

Define the tracking error vectors for the attitude and the angular velocity of the $i$-th quadrotor as
\begin{align}
e_{R_i} = \frac{1}{2}(R_{i_c}^T R_i -R_i^T R_{i_c})^\vee,\quad
e_{\Omega_i} = \Omega_i - R_i^T R_{i_c}\Omega_{i_c}.
\end{align}
The thrust magnitude is chosen as the length of $u_i$, projected on to $-R_ie_3$, and the control moment is chosen as a tracking controller on $\SO$:
\begin{align}
f_i & = - u_i\cdot R_i e_3,\label{eqn:fi}\\
M_i & = -\frac{k_R}{\epsilon^2} e_{R_i} -\frac{k_\Omega}{\epsilon} e_{\Omega_i} + \Omega_i\times J_i\Omega_i\nonumber\\
&\quad - J_i (\hat\Omega_i R_i^T R_{i_c}\Omega_{i_c}-R_i^T R_{i_c}\dot\Omega_{i_c}),\label{eqn:Mi}
\end{align}
where $\epsilon,k_R,k_\Omega$ are positive constants. 

Stability of the corresponding controlled systems for the full dynamic model can be studied by showing the the error due to the discrepancy between the desired direction $b_{3_i}$ and the actual direction $R_i e_3$ can be compensated via Lyapunov analysis~\cite{LeeLeoPICDC10}, or singular perturbation theory can be applied to the attitude dynamics of quadrotors~\cite{SreLeePICDC13,LeeSrePICDC13}. For both cases, the structures of the control systems are identical, and here we use singular perturbation for simplicity.

\begin{prop}\label{prop:FDM}
Consider the full dynamic model defined by \refeqn{ddotx0}-\refeqn{dotWi}. For given tracking commands $x_{0_d},R_{0_d}$ and the desired direction of the first body-fixed axis $b_{1_i}$, control inputs for quadrotors are designed as \refeqn{fi} and \refeqn{Mi}. Then, there exists $\epsilon^\star>0$, such that for all $\epsilon<\epsilon^\star$, the zero equilibrium of the tracking errors $(e_{x_0}, \dot e_{x_0}, e_{R_0}, e_{\Omega_0},e_{q_i},e_{\omega_i}, e_{R_i},e_{\Omega_i}$ is exponentially stable. 
\end{prop}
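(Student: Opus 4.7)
The plan is to treat the quadrotor attitude errors $(e_{R_i},e_{\Omega_i})$ as a boundary-layer (fast) subsystem and the payload-plus-link errors $(e_{x_0},\dot e_{x_0},e_{R_0},e_{\Omega_0},e_{q_i},e_{\omega_i})$ as the reduced (slow) subsystem, with the small parameter $\epsilon$ in \refeqn{Mi} playing the role of the singular-perturbation parameter. The first step is to rewrite the actual thrust on the $i$-th quadrotor, $u_i^{\mathrm{act}} = -f_i R_i e_3 = (u_i\cdot R_i e_3)R_i e_3$, as the ideal thrust $u_i$ from Section \ref{sec:SDM} plus a residual $X_i$ that vanishes when $R_i e_3 = b_{3_i}$. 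Using the standard estimate relating $e_{R_i}$ to the angle between these two unit vectors, as in \cite{LeeLeoPICDC10}, I would establish $\|X_i\|\le c_i \|e_{R_i}\|\|u_i\|$ on any sublevel set of interest.

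Next, I would analyze the boundary layer after the rescaling $\tau=t/\epsilon$. With the slow state frozen, the equation for each quadrotor attitude reduces in the $\tau$-time scale to the standard tracking error dynamics on $\SO$ driven by the controller \refeqn{Mi}. A Lyapunov function of the form $\mathcal{V}_i^f = \tfrac{1}{2}e_{\Omega_i}\cdot J_i e_{\Omega_i} + k_R \Psi_i + c_i\, e_{R_i}\cdot J_i e_{\Omega_i}$, with $\Psi_i=\tfrac{1}{2}\mathrm{tr}[I-R_{i_c}^T R_i]$ and $c_i$ sufficiently small, is positive definite and satisfies $\tfrac{d}{d\tau}\mathcal{V}_i^f\le -\lambda_i\|(e_{R_i},e_{\Omega_i})\|^2$ on a sublevel set, by the classical argument of \cite{LeeLeoPICDC10}. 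Setting $e_{R_i}=e_{\Omega_i}=0$ yields $u_i^{\mathrm{act}}=u_i$, so the reduced system at $\epsilon=0$ coincides with the closed loop of Proposition~\ref{prop:SDM}, whose zero equilibrium is exponentially stable.

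Stability of the full interconnection would then be proved via a composite Lyapunov function $\mathcal{W} = \mathcal{V}^s + d\sum_{i=1}^n \mathcal{V}_i^f$, where $\mathcal{V}^s$ is the Lyapunov function constructed in Appendix~\ref{sec:prfSDM} for the reduced system and $d>0$ is a small weight. Using the bound on $X_i$ from the first step, one derives $\dot{\mathcal{V}}^s \le -\alpha_1\|z_s\|^2 + \beta_1\|z_s\|\sum_i\|e_{R_i}\|$ on the slow side and $\tfrac{d}{dt}\mathcal{V}_i^f \le -\tfrac{\alpha_2}{\epsilon}\|z_i^f\|^2 + \beta_2\|z_i^f\|\|z_s\|$ on the fast side, where the cross term on the fast side arises from the $\epsilon$-independent dependence of $R_{i_c},\Omega_{i_c},\dot\Omega_{i_c}$ on the slow state. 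Choosing $d$ small enough to absorb the two $\beta$-couplings and then $\epsilon<\epsilon^\star$ small enough to dominate the remaining cross terms makes $\dot{\mathcal{W}}$ negative definite in the full error state, yielding exponential stability.

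The hard part will be controlling the dependence of the commanded quadrotor attitude and its first two derivatives on the slow state. Because $b_{3_i}=-u_i/\|u_i\|$ and $u_i$ is assembled from $\mu_{i_d}$ via \refeqn{muid}, \refeqn{mui}, \refeqn{uip}, and \refeqn{uiperp}, both $\Omega_{i_c}$ and $\dot\Omega_{i_c}$ involve derivatives of the slow tracking errors and of the desired trajectory, and in particular inherit the inverse $(\mathcal{P}\mathcal{P}^T)^{-1}$ from \refeqn{muid}. One must show that on any forward-invariant sublevel set of $\mathcal{W}$ these commanded signals remain bounded, so that the cross coefficients $\beta_1,\beta_2$ above are uniform constants and do not blow up near singular link configurations. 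Once this boundedness is secured, the remainder of the singular-perturbation argument is standard.
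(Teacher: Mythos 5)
Your proposal follows essentially the same route as the paper: the identical singular-perturbation split with the quadrotor attitude errors $(e_{R_i},e_{\Omega_i})$ as the boundary layer (stabilized by the \SO{} tracking Lyapunov function of~\cite{LeeLeoPICDC10}) and the closed loop of Proposition~\ref{prop:SDM} as the reduced system recovered when $R_i e_3=b_{3_i}$. The only difference is that the paper concludes by invoking Tikhonov's theorem~\cite[Thm 9.3]{Kha96}, whereas you unpack that last step into an explicit composite Lyapunov interconnection argument --- a matter of presentation rather than of method, and your closing remark about the boundedness of $R_{i_c},\Omega_{i_c},\dot\Omega_{i_c}$ correctly identifies the hypothesis that the theorem citation quietly absorbs.
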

\begin{proof}
See Appendix \ref{sec:prfFDM}.
\end{proof}

\begin{figure}
\centerline{
	\subfigure[3D perspective]{
		\includegraphics[width=0.95\columnwidth]{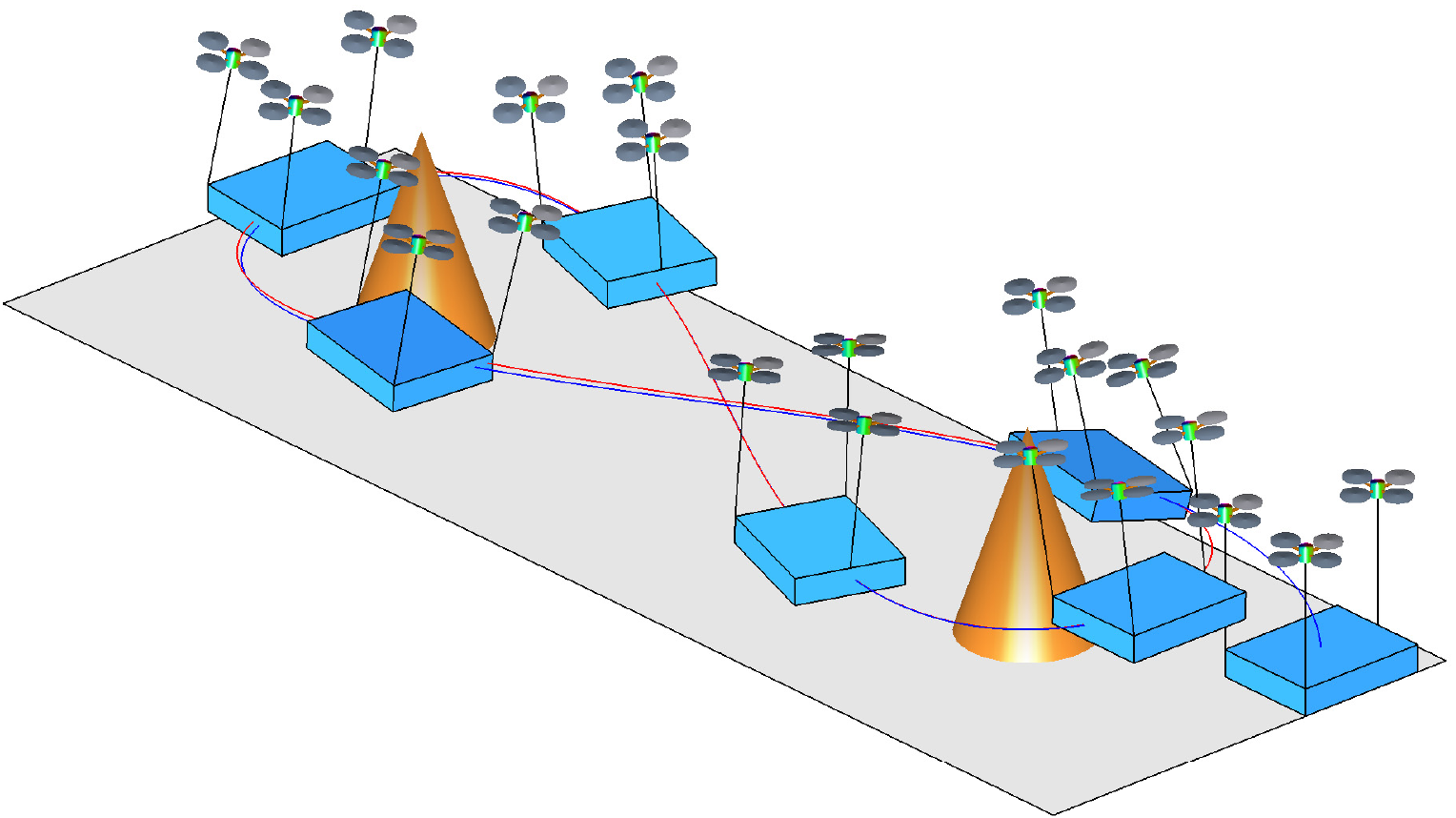}}
}
\centerline{
	\subfigure[Top view]{
	\setlength{\unitlength}{0.1\columnwidth}\scriptsize
	\begin{picture}(9.5,3.2)(0,0)
	\put(0,0){\includegraphics[width=0.95\columnwidth]{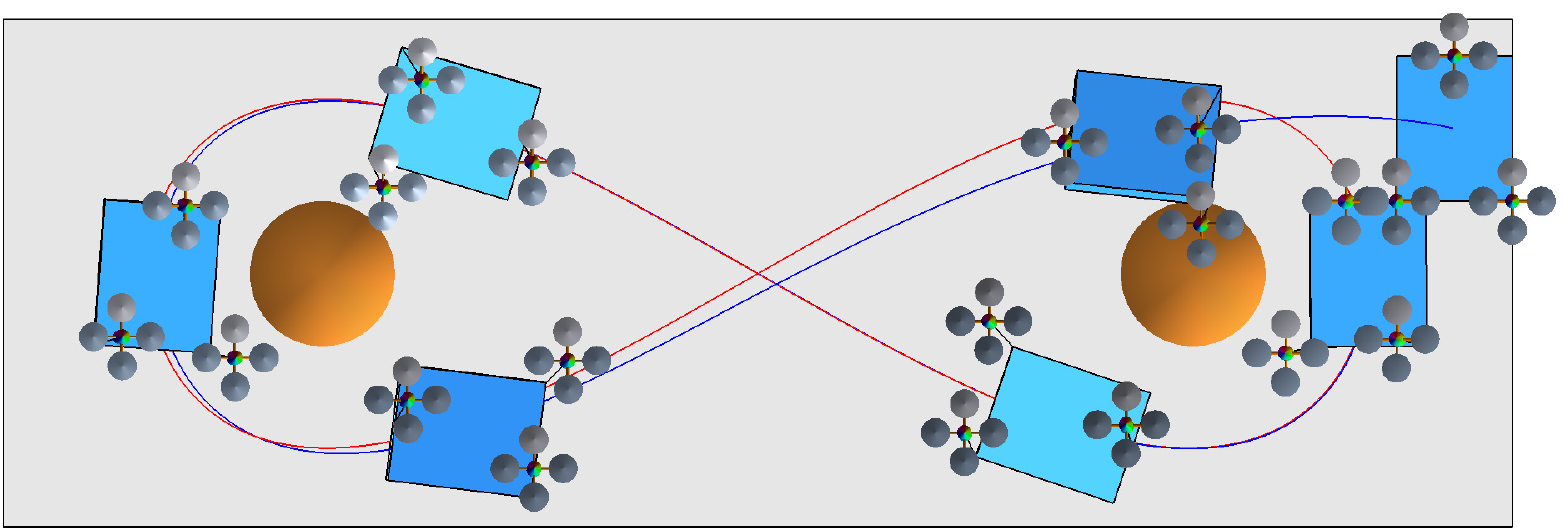}}
	\put(8.4,3.2){$t=0$}
	\put(6.4,2.8){$t=1.66$}
	\put(2.25,-0.15){$t=3.33$}
	\put(0.7,2.3){$t=5$}
	\put(2.25,3.0){$t=6.66$}
	\put(5.8,-0.15){$t=8.33$}
	\put(7.9,0.6){$t=10$}
	\end{picture}}	
}
\centerline{
	\subfigure[Side view]{
		\includegraphics[width=0.95\columnwidth]{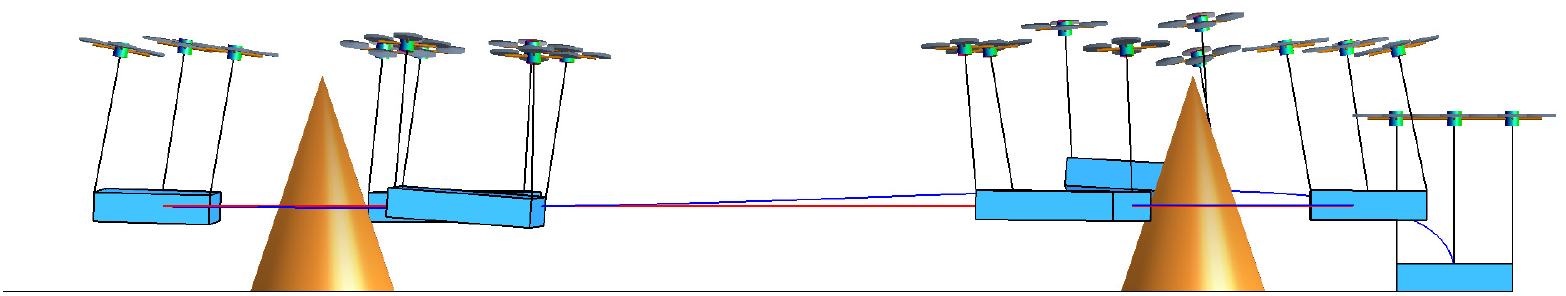}}
}
\caption{Snapshots of controlled maneuver (red:desired trajectory, blue:actual trajectory). A short animation illustrating this maneuver is available at {\href{http://youtu.be/h65IaQx_w6c}{http://youtu.be/h65IaQx\_w6c}}.}\label{fig:SS}
\end{figure}

\section{Numerical Example}\label{sec:NE}
We consider a numerical example where three quadrotors ($n=3$) transport a rectangular box along a figure-eight curve, that is a special case of Lissajous figure shaped like an $\infty$ symbol.

More explicitly, the mass of the payload is $m_0=1.5\,\mathrm{kg}$, and its length, width, and height are $1.0\,\mathrm{m}$, $0.8\,\mathrm{m}$, and $0.2\,\mathrm{m}$, respectively. Mass properties of three quadrotors are identical, and they are given by
\begin{gather*}
m_i=0.755\,\mathrm{kg},\quad J_i=\mathrm{diag}[0.0820,\, 0.0845,\, 0.1377]\,\mathrm{kgm^2}.
\end{gather*}
The length of cable is $l_i=1\,\mathrm{m}$, and they are attached to the following points of the payload.
\begin{gather*}
\rho_1 = [0.5,\,0,\,-0.1]^T,\\
\rho_1 = [-0.5,\,0.4,\,-0.1]^T,\quad
\rho_3 = [-0.5,\,-0.4,\,-0.1]^T.
\end{gather*}
In other words, the first link is attached to the center of the top, front edge, and the remaining two links are attached to the vertices of the top, rear edge (see Figure \ref{fig:DM}). 

The desired trajectory of the payload is chosen as
\begin{align*}
x_{0_d}(t) = [1.2\sin (0.4\pi t),\, 4.2\cos (0.2\pi t),\, -0.5]^T\,\mathrm{m}.
\end{align*}
The desired attitude of the payload is chosen such that its first axis is tangent to the desired path, and the third axis is parallel to the direction of gravity, it is given by
\begin{align*}
R_{0_d}(t) = \begin{bmatrix}\frac{\dot x_{0_d}}{\|\dot x_{0_d}\|}&\frac{\hat e_3\dot x_{0_d}}{\|\hat e_3\dot x_{0_d}\|}& e_3\end{bmatrix}.
\end{align*}
Initial conditions are chosen as
\begin{gather*}
x_0(0)=[1,\,4.8,\,0]^T,\quad v_0(0)=0_{3\times 1},\\
q_i(0)= e_3,\; \omega_i(0)=0_{3\times 1},\;
R_i(0)=I_{3\times 3},\; \Omega_i(0)=0_{3\times 1}.
\end{gather*}

\begin{figure}
\centerline{
	\subfigure[Position of payload ($x_0$:blue, $x_{0_d}$:red)]{
		\includegraphics[width=0.48\columnwidth]{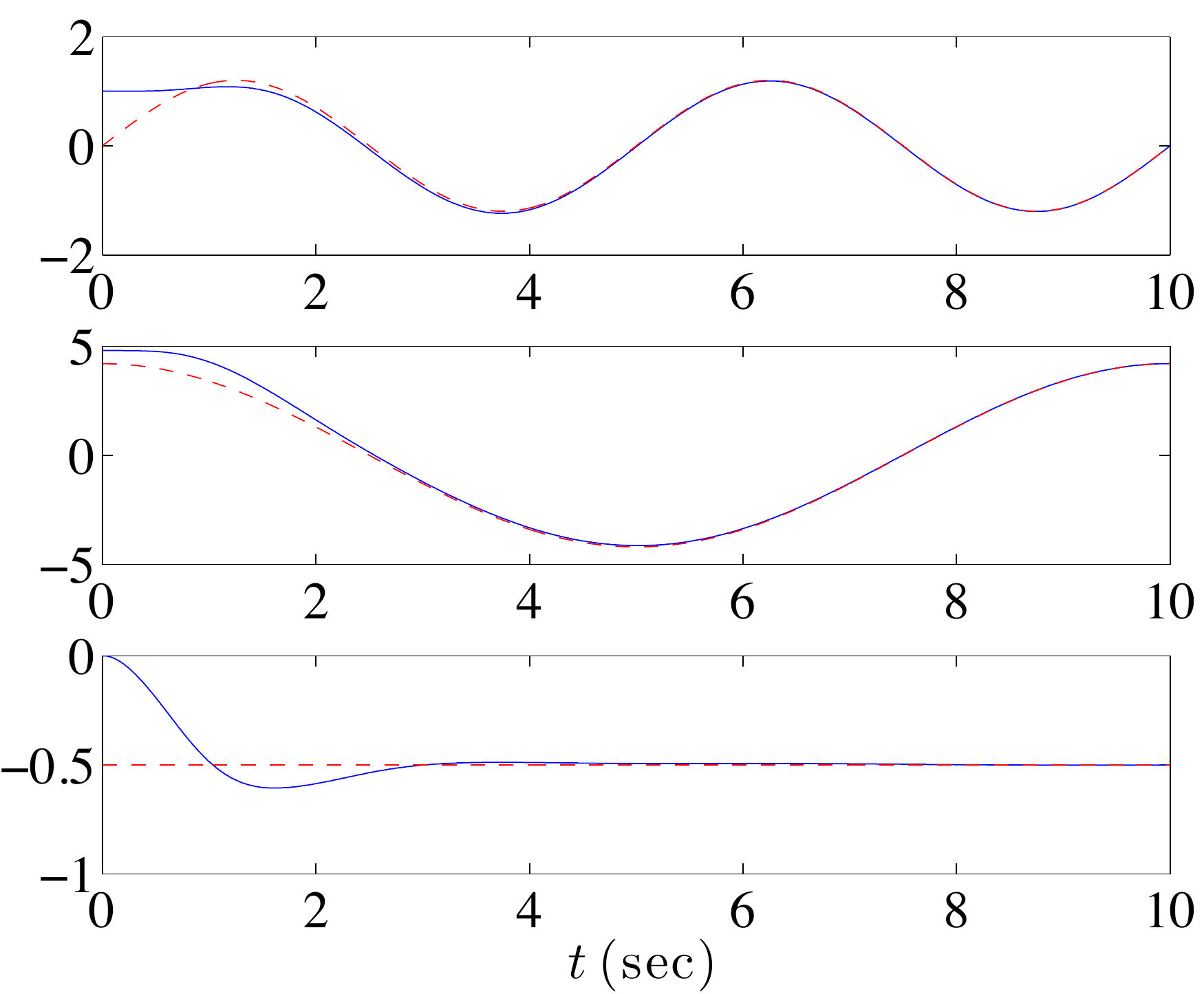}}
	\hfill
	\subfigure[Attitude tracking error of payload $\Psi_0=\frac{1}{2}\|R_0-R_{0_d}\|^2$]{
		\includegraphics[width=0.48\columnwidth]{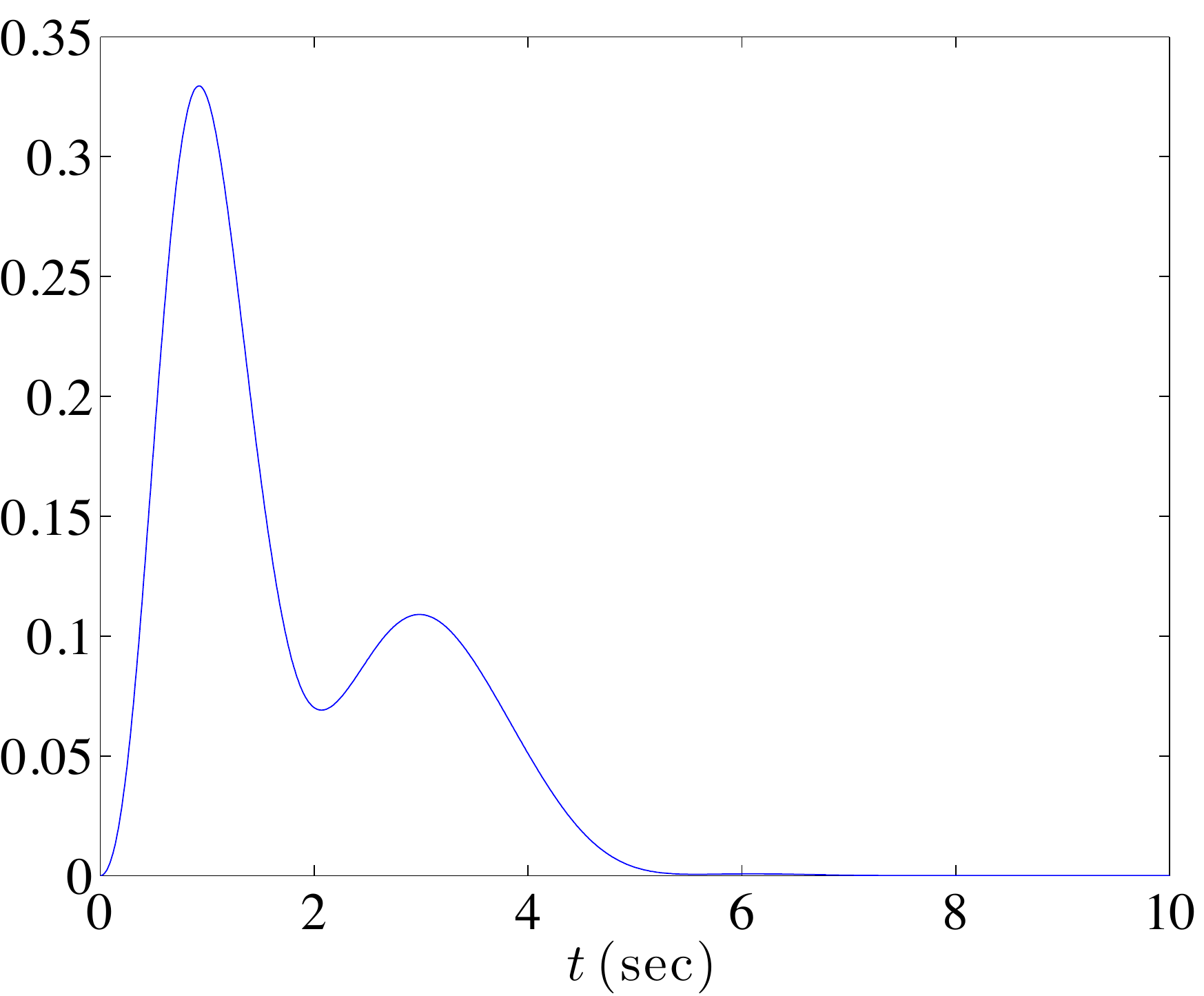}}
}
\centerline{
	\subfigure[Link direction error $\Psi_{q_i}=\frac{1}{2}\|q_i-q_{i_d}\|^2$]{
		\includegraphics[width=0.48\columnwidth]{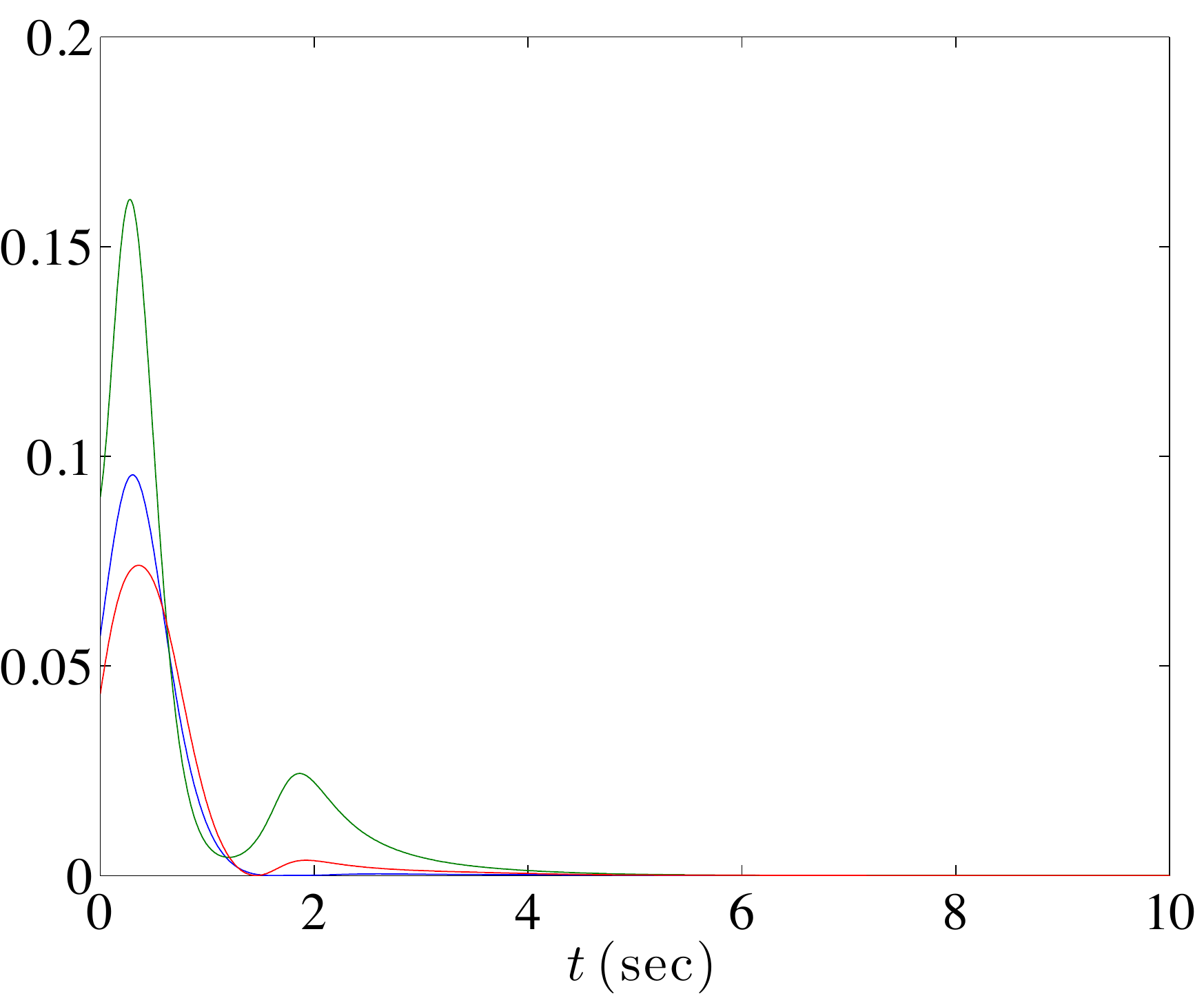}}
	\hfill
	\subfigure[Attitude tracking error of quadrotors $\Psi_i=\frac{1}{2}\|R_i-R_{i_d}\|^2$]{
		\includegraphics[width=0.48\columnwidth]{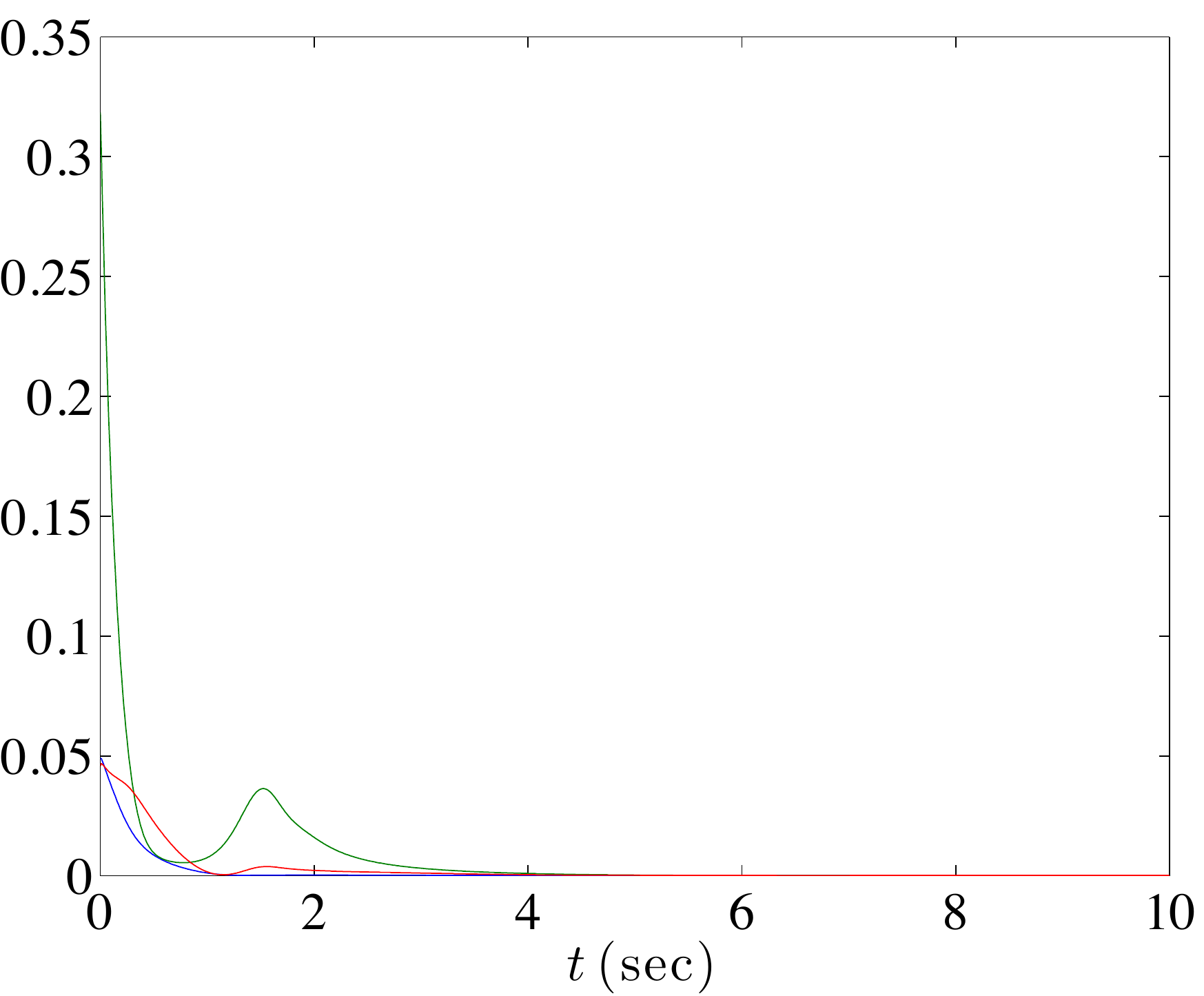}}
}
\centerline{
	\subfigure[Tension at links]{
		\includegraphics[width=0.45\columnwidth]{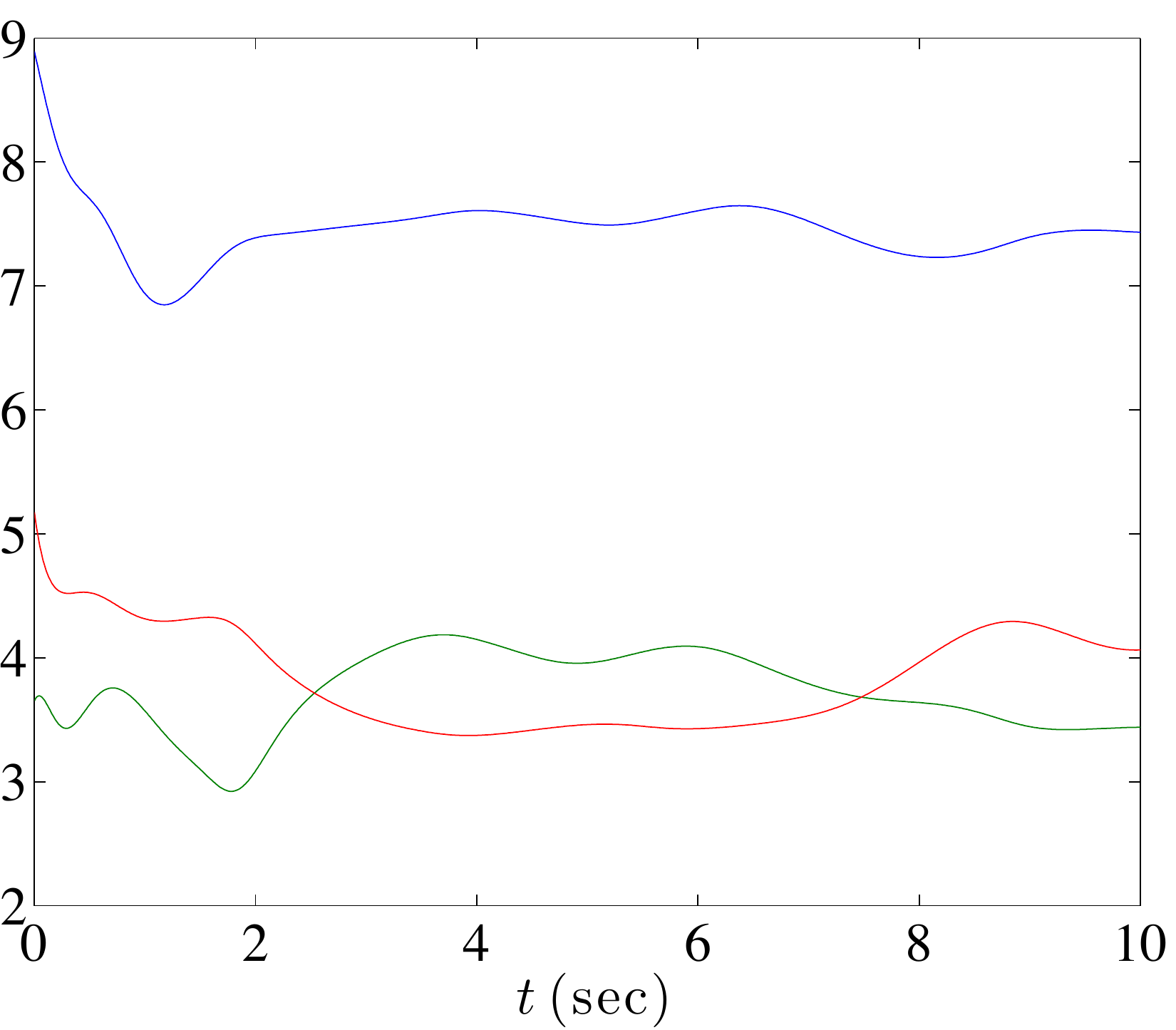}}
	\hfill
	\subfigure[Control input for quadrotors $f_i,M_i$]{
		\includegraphics[width=0.48\columnwidth]{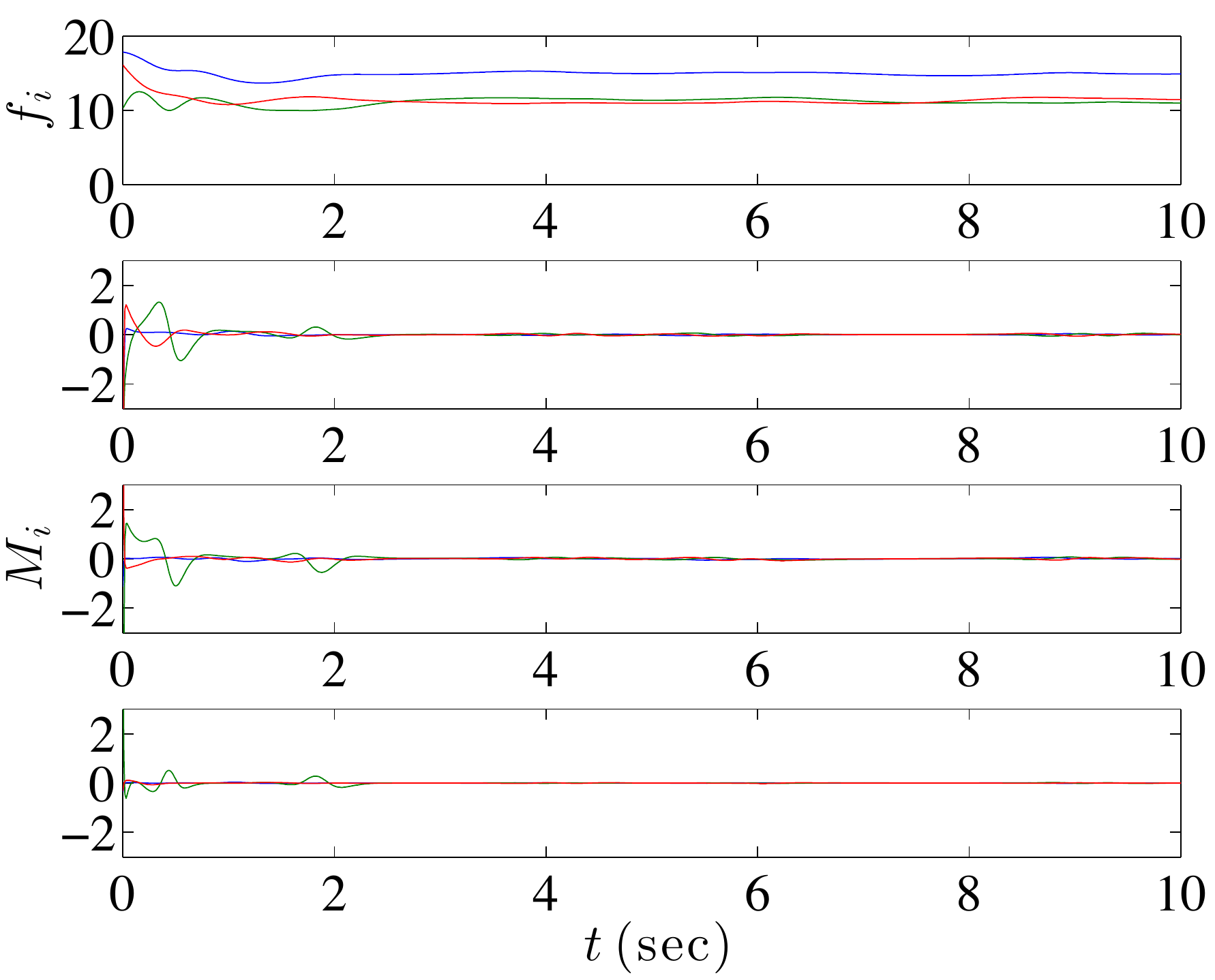}}
}
\caption{Simulation results for tracking errors and control inputs. (for figures (c)-(f): $i=1$:blue, $i=2$:green, $i=3$:red)}\label{fig:SR}
\end{figure}

The corresponding simulation results are presented at Figures \ref{fig:SS} and \ref{fig:SR}. Figure \ref{fig:SS} illustrates the desired trajectory that is shaped like a figure-eight curve around two obstacles represented by cones, and the actual maneuver of the payload and quadrotors. Figure \ref{fig:SR} shows tracking errors for the position and the attitude of the payload, tracking errors for the link directions and the attitude of quadrotors, as well as tension and control inputs. These illustrate excellent tracking performances of the proposed control system. 

\appendix

\subsection{Lagrangian Mechanics}\label{sec:EOM}

\paragraph{Derivatives of Lagrangian}

Here, we develop the equations of motion for the Lagrangian given by \refeqn{TT} and \refeqn{UU}. The derivatives of the Lagrangian are given by
\begin{align}
\D_{\dot x_0} \mathcal{L} & = m_T \dot x_0 +\sum_{i=1}^n m_i(R_0\hat\Omega_0\rho_i - l_i\dot q_i),\label{eqn:Ddotx0L}\\
\D_{\dot q_i} \mathcal{L} & = \sum_{i=1}^n m_i(l_i^2\dot q_i -l_i \dot x_0 -l_i R_0\hat\Omega_0\rho_i),\\
\D_{\Omega_0} \mathcal{L} & = \bar J_0\Omega_0 +\sum_{i=1}^n m_i\hat\rho_i R_0^T(\dot x_0-l_i\dot q_i),\\
\D_{\Omega_i} \mathcal{L} & = J_i\Omega_i,\\
\D_{x_0}\mathcal{L} & = m_T g e_3,\\
\D_{q_i}\mathcal{L} & = -  m_il_ige_3,\label{eqn:DqiL}
\end{align}
where $\bar J_0 = J_0 -\sum_{i=1}^n m_i\hat\rho_i^2$. The variation of a rotation matrix is represented by $\delta R_j=R_j\hat\eta_j$ for $\eta_j\in\Re^3$~\cite{Lee08}. Using this the derivative of the Lagrangian with respect to $R_j$ can be written as
\begin{align}
\D_{R_0}&\mathcal{L}\cdot\delta R_0 = \sum_{i=1}^n m_i R_0\hat\eta_0 \hat\Omega_0\rho_i\cdot(\dot x_i -l_i \dot q_i) + m_ige_3\cdot R_0\hat\eta_0\rho_i\nonumber\\
& = \sum_{i=1}^n m_i\{\widehat{\hat\Omega_0\rho_i}R_0^T(\dot x_0-l_i \dot q_i)+g\hat\rho_i R_0^T e_3\}\cdot \eta_0\nonumber\\
& \triangleq \d_{R_0}\mathcal{L}\cdot\eta_0,\label{eqn:dR0L}
\end{align}
where $\d_{R_0}\mathcal{L}\in(\Re^3)^*\simeq \Re^3$ is referred to as left-trivialized derivatives. Substituting $\delta R_j=R_j\hat\eta_j$ into the attitude kinematic equations \refeqn{dotRi} and rearranging, the variation of the angular velocity can be written as $\delta\Omega_j=\dot\eta_j + \Omega_j\times\eta_j$. For the variation model of $q_i$ given at \refeqn{delqi}, we have $\delta q_i =\xi_i\times q_i$ and $\dot \xi_i = \dot\xi_i\times q_i + \xi_i\times \dot q_i$. 

\paragraph{Lagrange-d'Alembert Principle}
Let $\mathfrak{G}=\int_{t_0}^{t_f}\mathcal{L}\,dt$ be the action integral. Using the above equations, the infinitesimal variation of the action integral can be written as
\begin{align*}
\delta \mathfrak{G} & = \int_{t_0}^{t_f} \D_{\dot x_0} \mathcal{L}\cdot\delta\dot x_0 + \D_{ x_0} \mathcal{L}\cdot\delta x_0\\
&\quad + \D_{\Omega_0} \mathcal{L}\cdot(\dot\eta_0+\Omega_0\times\eta_0)+ \d_{ R_0} \mathcal{L}\cdot \eta_0\\
&\quad +\sum_{i=1}^n \D_{\dot q_i} \mathcal{L}\cdot(\dot\xi_i\times q_i+\xi_i\times\dot q_i) + \D_{ q_i} \mathcal{L}\cdot(\xi_i\times q_i)\\
&\quad + \sum_{i=1}^n \D_{\Omega_i} \mathcal{L}\cdot(\dot\eta_i+\Omega_i\times\eta_i).
\end{align*}
The total thrust at the $i$-th quadrotor with respect to the inertial frame is denoted by $u_i = -f_iR_ie_3\in\Re^3$ and the total moment at the $i$-th quadrotor is defined as $M_i\in\Re^3$. The corresponding virtual work can be written as
\begin{align*}
\delta\mathcal{W} & = \int_{t_0}^{t_f} \sum_{i=1}^n u_i\cdot\braces{\delta x_0 + R_0\hat\eta_0\rho_i -l_i\xi_i\times q_i}+ M_i\cdot\eta_i.
\end{align*}
According to Lagrange-d'Alembert principle, we have $\delta\mathfrak{G}=-\delta\mathcal{W}$ for any variation of trajectories with fixed end points.

By using integration by parts and rearranging, we obtain the following Euler-Lagrange equations:
\begin{gather*}
\frac{d}{dt}\D_{\dot x_0}\mathcal{L}-\D_{x_0}\mathcal{L} = \sum_{i=1}^n u_i,\\
\frac{d}{dt}\D_{\Omega_0}\mathcal{L}+\Omega_0\times \D_{\Omega_0}\mathcal{L}
-\d_{R_0}\mathcal{L} = \sum_{i=1}^n\hat\rho_0 R_0^T u_i,\\
\hat q_i \frac{d}{dt}\D_{\dot q_i}\mathcal{L}-\hat q_i \D_{q_i}\mathcal{L} =  -l_i\hat q_i u_i,\\
\frac{d}{dt}\D_{\Omega_i}\mathcal{L}+\Omega_i\times \D_{\Omega_i}\mathcal{L} = M_i.
\end{gather*}
Substituting \refeqn{Ddotx0L}-\refeqn{dR0L} into these, and rearranging by the fact that $\ddot q_i = -\hat q_i \dot\omega_i -\|\omega_i\|^2 q_i$ and $\hat q_i\ddot q_i = -\hat q_i^2 \dot\omega_i=\dot\omega_i$~\cite{LeeLeoIJNME08}, the equations of motion are given by
\begin{gather}
m_T\ddot x_0 + \sum_{i=1}^n m_i(-R_0\hat\rho_i\dot\Omega_0 + l_i\hat q_i\dot\omega_i) +\sum_{i=1}^n m_iR_0\hat\Omega_0^2\rho_i\nonumber\\
 + m_il_i\|\omega_i\|^2q_i = m_T ge_3+\sum_{i=1}^n u_i,\label{eqn:x0ddot0}\\
\bar J_0\dot\Omega_0 + \sum_{i=1}^n m_i\hat\rho_i R_0^T(\ddot x_0 +l_i\hat q_i\dot\omega_i +l_i\|\omega_i\|^2 q_i)  +\hat\Omega_0 \bar J_0\Omega_0\nonumber\\
=\sum_{i=1}^n \hat\rho_i R_0^T (u_i+m_i ge_3),\label{eqn:W0dot0}\\
m_il_i\dot\omega_i -m_i\hat q_i\ddot x_0 +m_i \hat q_i R_0\hat\rho_i\dot\Omega_0
-m_i\hat q_iR_0\hat\Omega_0^2\rho_i\nonumber\\
 = -\hat q_i(u_i+m_ig e_3),\label{eqn:widot0}\\
J_i\dot\Omega_i + \Omega_i\times J_i\Omega_i =M_i,
\end{gather}
where $m_T = m_0+\sum_{i=1}^n m_i\in\Re^3$ and $\bar J_0 = J_0 -\sum_{i=1}^n m_i\hat\rho_i^2\in\Re^{3\times 3}$. This can be rewritten in a matrix form as given at \refeqn{EOMM}.
\begin{figure*}
{\tiny\selectfont
\begin{gather}
\begin{bmatrix}
m_T & \sum_{i=1}^n -m_i R_0\hat\rho_i & m_1l_1\hat q_1 & \cdots & m_nl_n\hat q_n\\
\sum_{i=1}^n m_i\hat\rho_iR_0^T & \bar J_0 & m_1l_1\hat\rho_1 R_0^T\hat q_1 & \cdots & m_nl_n\hat\rho_n R_0^T\hat q_n\\
-m_1l_1\hat q_1 & m_1l_1\hat q_1 R_0\hat\rho_1 & m_1l_1^2 & \cdots & 0\\
\vdots & \vdots & \vdots & &\vdots\\
-m_nl_n\hat q_n & m_nl_n\hat q_n R_0\hat\rho_n & 0 & \cdots & m_nl_n^2\\
\end{bmatrix}
\begin{bmatrix} \ddot x_0 \\ \dot\Omega_0 \\ \dot\omega_1 \\ \vdots \\ \dot\omega_n \end{bmatrix}
=\begin{bmatrix}
-\sum_{i=1}^n \{m_iR_0\hat\Omega_0^2\rho_i + m_il_i\|\omega_i\|^2q_i\}+ m_T ge_3+\sum_{i=1}^n u_i\\
-\hat\Omega_0\bar J_0\Omega_0-\sum_{i=1}^n m_il_i\hat\rho_i R_0^T\|\omega_i\|^2 q_i  +
\sum_{i=1}^n \hat\rho_i R_0^T (u_i+m_i ge_3)\\
m_1l_1\hat q_1 R_0\hat\Omega_0^2\rho_1 - l_1\hat q_1(u_1+m_1g e_3)\\
\vdots\\
m_nl_n\hat q_n R_0\hat\Omega_0^2\rho_n - l_n\hat q_i(u_n+m_ng e_3)
\end{bmatrix}.\label{eqn:EOMM}
\end{gather}}
\end{figure*}

Next, we substitute \refeqn{widot0} into \refeqn{x0ddot0} and \refeqn{W0dot0} to eliminate the dependency of $\dot\omega_i$ in the expressions for $\ddot x_0$ and $\dot\Omega_0$. Using the fact that $I+\hat q_i^2 = q_iq_i^T$ for any $q_i\in\Sph^2$ and $\hat\Omega_0\hat\rho_i\Omega_0=-\hat\rho_i\hat\Omega_0^2\rho_i$ for any $\Omega_0,\rho_i\in\Re^3$, we obtain \refeqn{ddotx0} and \refeqn{dotW0} after rearrangements and simplifications. It is straightforward to see that \refeqn{widot0} is equivalent to \refeqn{dotwi}.

\subsection{Proof of Proposition \ref{prop:SDM}}\label{sec:prfSDM}

\paragraph{Error Dynamics}

From \refeqn{ddotx0s} and \refeqn{mui}, the dynamics of the position tracking error is given by
\begin{align*}
m_0\ddot e_{x_0} & = m_0(ge_3-\ddot x_{0_d}) + \sum_{i=1}^n q_iq_i^T \mu_{i_d}.
\end{align*}
From \refeqn{muid0} and \refeqn{Fd}, this can be rearranged as
\begin{align}
\ddot e_{x_0} & = ge_3-\ddot x_{0_d} + \frac{1}{m_0}F_d  +Y_x,\nonumber\\
& = -k_{x_0}e_{x_0} - k_{\dot x_0} \dot e_{x_0}  +Y_x,
\label{eqn:ddotex0}
\end{align}
where the last term $Y_x\in\Re^3$ to the error caused by the difference between $q_i$ and $q_{i_d}$, and it is given by
\begin{align*}
Y_x=\frac{1}{m_0}\sum_{i=1}^n (q_iq_i^T -I) \mu_{i_d}.
\end{align*}
We have $\mu_{i_d}=q_{i_d}q_{i_d}^T\mu_{i_d}$ from \refeqn{qid}. Using this, the error term can be written in terms of $e_{q_i}$ as
\begin{align}
Y_x  & = \frac{1}{m_0}\sum_{i=1}^n 
 (q_{i_d}^T\mu_{i_d})\{(q_i^Tq_{i_d})q_i-q_{i_d}\}\nonumber\\
& = -\frac{1}{m_0}\sum_{i=1}^n (q_{i_d}^T\mu_{i_d}) \hat q_i e_{q_i}.\label{eqn:Yx}
\end{align}
Using \refeqn{muid}, an upper bound of $Y_x$ can be obtained as
\begin{align*}
\|Y_x\| & \leq \frac{1}{m_0} \sum_{i=1}^n \|\mu_{i_d}\|\|e_{q_i}\|\leq  \sum_{i=1}^n\gamma (\|F_d\|+\|M_d\|)\|e_{q_i}\|,
\end{align*}
where $\gamma= \frac{1}{m_0\sqrt{\lambda_{m}[\mathcal{P}\mathcal{P}^T]}}$. From \refeqn{Fd} and \refeqn{Md}, this can be further bounded by
\begin{align}
\|Y_x\|  \leq & \sum_{i=1}^n \{\beta(k_{x_0}\|e_{x_0}\|+k_{\dot x_0}\|\dot e_{x_0}\|)\nonumber\\
&\quad  + \gamma(k_{R_0} \|e_{R_0}\| + k_{\Omega_0}\|e_{\Omega_0}\|) + B\}\|e_{q_i}\|,\label{eqn:YxB}
\end{align}
for some positive constant $B$ that is determined by the given desired trajectories of the payload, and $\beta=m_0\gamma$. Throughout the remaining parts of the proof, any bound that can be obtained from $x_{0_d},R_{0_d}$ is denoted by $B$ for simplicity. In short, the position tracking error dynamics of the payload can be written as \refeqn{ddotex0}, where the error term is bounded by \refeqn{YxB}.

Similarly, we find the attitude tracking error dynamics for the payload as follows. Using \refeqn{dotW0s}, \refeqn{Md}, and \refeqn{mui}, the time-derivative of $J_0 e_{\Omega_0}$ can be written as
\begin{align}
J_0\dot e_{\Omega_0} & = (J_0e_{\Omega_0}+d)^\wedge e_{\Omega_0} - k_{R_0}e_{R_0}-k_{\Omega_0}e_{\Omega_0}+Y_R,\label{eqn:doteW0}
\end{align}
where $d=(2J_0-\trs{J_0}I)R_0^T R_{0_d}\Omega_{0_d}\in\Re^3$~\cite{GooLeePECC13}. Note that the term $d$ is bounded. The error term in the attitude dynamics of the payload, namely $Y_R\in\Re^3$ is given by
\begin{align}
Y_R & = \sum_{i=1}^n\hat\rho_i R_0^T (q_iq_i^T - I)\mu_{i_d}
= -\sum_{i=1}^n\hat\rho_i R_0^T (q_{i_d}^T \mu_{i_d}) \hat q_i e_{q_i}.\label{eqn:YR}
\end{align}
Similar with \refeqn{YxB}, an upper bound of $Y_R$ can be obtained as
\begin{align}
\|Y_R\|  \leq & \sum_{i=1}^n\{\delta_i (k_{x_0}\|e_{x_0}\|+k_{\dot x_0}\|\dot e_{x_0}\|)\nonumber\\
&\quad  + \sigma_i(k_{R_0} \|e_{R_0}\| + k_{\Omega_0}\|e_{\Omega_0}\|) + B\}\|e_{q_i}\|,\label{eqn:YRB}
\end{align}
where $\delta_i = m_0\frac{\|\hat\rho_i\|}{\sqrt{\lambda_{m}[\mathcal{P}\mathcal{P}^T]}},\sigma_i=\frac{\delta_i}{m_0}\in\Re$.

Next, from \refeqn{dotwif}, the time-derivative of the angular velocity error, projected on to the plane normal to $q_i$ is given as
\begin{align}
-\hat q_i^2 \dot e_{\omega_i} 
& = -k_q e_{q_i} - k_\omega e_{\omega_i}.\label{eqn:dotewi}
\end{align}

\paragraph{Stability Proof}

Define an attitude configuration error function $\Psi_{R_0}$ for the payload as
\begin{align*}
\Psi_{R_0} = \frac{1}{2}\trs{I-R_{0_d}^T R_0}.
\end{align*}
It is positive-definite about $R_0=R_{0_d}$, and $\dot\Psi_{R_0} = e_{R_0}\cdot e_{\Omega_0}$~\cite{LeeLeoPICDC10,GooLeePECC13}. We also introduce a configuration error function $\Psi_{q_i}$ for each link that is positive-definite about $q_i=q_{i_d}$ as
\begin{align*}
\Psi_{q_i} = 1-q_{i}\cdot q_{i_d}.
\end{align*}
For positive constants $e_{x_{\max}}, \psi_{R_0}, \psi_{q_i}\in\Re$, consider the following open domain containing the zero equilibrium of tracking error variables:
\begin{align}
D=\{&(e_{x_0},\dot e_{x_0}, e_{R_0}, e_{\Omega_0}, e_{q_i}, e_{\omega_i})\in(\Re^3)^4\times (\Re^3\times\Re^3)^n\,|\,\nonumber\\
& \|e_{x_0}\|< e_{x_{\max}},\,\Psi_{R_0}< \psi_{R_0}<1,\,\Psi_{q_i}< \psi_{q_i}<1\}.\label{eqn:D}
\end{align}
In this domain, we have $\|e_{R_0}\|=\sqrt{\Psi_{R_0}(2-\Psi_{R_0}} \leq \sqrt{\psi_{R_0}(2-\psi_{R_0})} \triangleq \alpha_0 < 1$, and $\|e_{q_i}\|=\sqrt{\Psi_{q_i}(2-\Psi_{q_i})} \leq \sqrt{\psi_{q_i}(2-\psi_{q_i})} \triangleq \alpha_i < 1$. It is assumed that $\psi_{q_i}$ is sufficiently small such that $n\alpha_i\beta < 1$. 

We can show that the configuration error functions are quadratic with respect to the error vectors in the sense that
\begin{gather*}
\frac{1}{2}\|e_{R_0}\|^2 \leq \Psi_{R_0} \leq \frac{1}{2-\psi_{R_0}} \|e_{R_0}\|^2,\\
\frac{1}{2}\|e_{q_i}\|^2 \leq \Psi_{q_i} \leq \frac{1}{2-\psi_{q_i}} \|e_{q_i}\|^2,
\end{gather*}
where the upper bounds are satisfied only in the domain $D$. 

Define a Lyapunov function as
\begin{align*}
\mathcal{V} & = \frac{1}{2}\|\dot e_{x_0}\|^2 + \frac{1}{2}k_{x_0}\|e_{x_0}\|^2   +c_x e_{x_0}\cdot \dot e_{x_0}\\
&\quad + \frac{1}{2}e_{\Omega_0}\cdot J_0\Omega_0 + k_{R_0}\Psi_{R_0} + c_R e_{R_0}\cdot J_0e_{\Omega_0}\\
&\quad+ \sum_{i=1}^n \frac{1}{2}\|e_{\omega_i}\|^2 + k_q\Psi_{q_i} + c_q e_{q_i}\cdot e_{\omega_i},
\end{align*}
where $c_x,c_R,c_q$ are positive constants. 

Let $z_{x_0} = [\|e_{x_0}\|,\|\dot e_{x_0}\|]^T$, $z_{R_0}=[\|e_{R_0}\|,\|e_{\Omega_0}\|]^T$, $z_{q_i}= [\|e_{q_i}\|,\|e_{\omega_i}\|]^T\in\Re^2$. The Lyapunov function satisfies
\begin{align*}
z_{x_0}^T &\underline{P}_{x_0}z_{x_0} + 
z_{R_0}^T \underline{P}_{R_0}z_{R_0} + 
\sum_{i=1}^n z_{q_i}^T \underline{P}_{q_i}z_{q_i}\leq\mathcal{V}\\
&\leq z_{x_0}^T \overline{P}_{x_0}z_{x_0} + 
z_{R_0}^T \overline{P}_{R_0}z_{R_0} + 
\sum_{i=1}^n z_{q_i}^T \overline{P}_{q_i}z_{q_i},
\end{align*}
where the matrices $\underline{P}_{x_0},\underline{P}_{R_0},\underline{P}_{q_i},\underline{P}_{x_0},\underline{P}_{R_0},\underline{P}_{q_i}\in\Re^{2\times 2}$ are given by
\begin{alignat*}{2}
\underline{P}_{x_0}&=\frac{1}{2}\begin{bmatrix} k_{x_0} & -c_x\\-c_x & 1\end{bmatrix},&\;
\overline{P}_{x_0}&=\frac{1}{2}\begin{bmatrix} k_{x_0} & c_x\\c_x & 1\end{bmatrix},\\
\underline{P}_{R_0}&=\frac{1}{2}\begin{bmatrix} 2k_{R_0} & -c_R\overline\lambda\\-c_R\overline\lambda & \underline\lambda\end{bmatrix},&
\overline{P}_{R_0}&=\frac{1}{2}\begin{bmatrix} \frac{2k_{R_0}}{2-\psi_{R_0}} & c_R\overline\lambda\\c_R\overline\lambda & \overline\lambda\end{bmatrix},\\
\underline{P}_{q_i}&=\frac{1}{2}\begin{bmatrix} 2k_{q} & -c_q\\-c_q & 1\end{bmatrix},&
\overline{P}_{q_i}&=\frac{1}{2}\begin{bmatrix} \frac{2k_{q}}{2-\psi_{q_i}} & c_q\\c_q & 1\end{bmatrix},
\end{alignat*}
where $\underline\lambda=\lambda_{m}[J_0]$ and $\overline\lambda=\lambda_{M}[J_0]$. If the constants $c_x,c_{R_0},c_q$ are sufficiently small, all of the above matrices are positive-definite. It follows that the Lyapunov function is positive-definite and decrescent. 

The time-derivative of the Lyapunov function along \refeqn{ddotex0}, \refeqn{doteW0}, and \refeqn{dotewi} is given by
\begin{align*}
\dot{\mathcal{V}} & = -(k_{\dot x_0}-c_x) \|\dot e_{x_0}\|^2 
-c_xk_{x_0} \|e_{x_0}\|^2 -c_xk_{\dot x_0} e_{x_0}\cdot \dot e_{x_0}\nonumber\\
&\quad+ (c_x e_{x_0}+\dot e_{x_0})\cdot Y_x -k_{\Omega_0}\|e_{\Omega_0}\|^2 + c_R \dot e_R\cdot J_0e_{\Omega_0}\nonumber\\
&\quad -c_R k_{R_0}\|e_{R_0}\|^2 + c_R e_{R_0}\cdot ((J_0e_{\Omega_0}+d)^\wedge e_{\Omega_0}-k_{\Omega_0} e_{\Omega_0})\nonumber\\
&\quad + (e_{\Omega_0}+c_R e_{R_0}) \cdot Y_R\\
&\quad +\sum_{i=1}^n -(k_\omega-c_q) \|e_{\omega_i}\|^2 -c_qk_q \|e_{q_i}\|^2 -c_q k_\omega e_{q_i}\cdot e_{\omega_i}.
\end{align*}
Since $\|e_{R_0}\|\leq 1$, $\| \dot e_{R_0}\|\leq \|e_{\Omega_0}\|$ and $\|d\|\leq B$, 
\begin{align}
\dot{\mathcal{V}} & = -(k_{\dot x_0}-c_x) \|\dot e_{x_0}\|^2 
-c_xk_{x_0} \|e_{x_0}\|^2 -c_xk_{\dot x_0} e_{x_0}\cdot \dot e_{x_0}\nonumber\\
&\quad+ (c_x e_{x_0}+\dot e_{x_0})\cdot Y_x -(k_{\Omega_0}-2c_R\overline\lambda)\|e_{\Omega_0}\|^2 \nonumber\\
&\quad -c_R k_{R_0}\|e_{R_0}\|^2 + c_R(k_{\Omega_0}+B) \|e_{R_0}\|\|e_{\Omega_0}\|\nonumber\\
&\quad + (e_{\Omega_0}+c_R e_{R_0}) \cdot Y_R\nonumber\\
&\quad +\sum_{i=1}^n -(k_\omega-c_q) \|e_{\omega_i}\|^2 -c_qk_q \|e_{q_i}\|^2 -c_q k_\omega e_{q_i}\cdot e_{\omega_i}.
\label{eqn:dotV2}
\end{align}
From \refeqn{YxB}, an upper bound of the fourth term of the right-hand side is given by
\begin{align}
\|&(c_x e_{x_0}+\dot e_{x_0})\cdot Y_x\|  \leq  \nonumber\\
&\sum_{i=1}^n\alpha_i\beta( c_xk_{x_0}\|e_{x_0}\|^2 + c_x k_{\dot x_0}\|e_{x_0}\|\|\dot e_{x_0}\| +k_{\dot x_0} \|\dot e_{x_0}\|^2)
\nonumber\\
&+\{c_xB \|e_x\|+(\beta k_{x_0}e_{x_{\max}}+B) \|\dot e_{x_0}\| \} \|e_{q_i}\|\nonumber\\
&+\alpha_i\gamma(c_x\|e_{x_0}\|+\|\dot e_{x_0}\|)(k_{R_0}\|e_{R_0}\|+k_{\Omega_0}\|e_{\Omega_0}\|).
\label{eqn:YxB2}
\end{align}
Similarly, using \refeqn{YRB},
\begin{align}
\|&(c_R e_{R_0}+e_{\Omega_0})\cdot Y_R\|\leq \nonumber\\
&\sum_{i=1}^n \alpha_i\sigma_i( c_R k_{R_0}\|e_{R_0}\|^2 + c_R k_{\Omega_0}\|e_{R_0}\|\|e_{\Omega_0}\| + k_{\Omega_0}\|e_{\Omega_0}\|^2)\nonumber\\
&+\{c_R B\|e_{R_0}\| + (\alpha_0\sigma_ik_{R_0}+B)\|e_{\Omega_0}\|\}\|e_{q_i}\|\nonumber\\
&+\alpha_i\delta_i(c_R \|e_{R_0}\|+\|e_{\Omega_0}\|)(k_{x_0}\|e_{x_0}\|+k_{\dot x_0}\|\dot e_{x_0}\|).
\end{align}
Substituting these into \refeqn{dotV2} and rearranging, $\dot{\mathcal{V}}$ is bounded by
\begin{align*}
\dot{\mathcal{V}} \leq \sum_{i=1}^n - z_i^T W_i z_i,
\end{align*}
where $z=[\|z_{x_0}\|,\, \|z_{R_0}\|,\,\ \|z_{q_i}\|]^T\in\Re^3$, and the matrix $W_i\in\Re^{3\times 3}$ is defined as
\begin{align}
W_i = 
\begin{bmatrix} \lambda_m[W_{x_i}] & -\frac{1}{2}\|W_{xR_i}\| & -\frac{1}{2}\|W_{xq_i}\|\\
-\frac{1}{2}\|W_{xR_i}\| & \lambda_m[W_{R_i}] & -\frac{1}{2}\|W_{Rq_i}\| \\
-\frac{1}{2}\|W_{xq_i}\| & -\frac{1}{2}\|W_{Rq_i}\| & \lambda_m[W_{q_i}]
\end{bmatrix},\label{eqn:Wi}
\end{align}
where the sub-matrices are given by
\begin{gather*}
W_{x_i} = \frac{1}{n}\begin{bmatrix}
c_x k_{x_0} (1-n\alpha_i\beta) & -\frac{c_x k_{\dot x_0}}{2}(1+n\alpha_i\beta)\\
-\frac{c_x k_{\dot x_0}}{2}(1+n\alpha_i\beta) & k_{\dot x_0}(1-n\alpha_i\beta)-c_x 
\end{bmatrix},\\
W_{R_i} = \frac{1}{n}\begin{bmatrix}
c_R k_{R_0} (1-n\alpha_i\sigma_i) & -\frac{c_R }{2}(k_{\Omega_0}+B+n\alpha_i\sigma_i)\\
-\frac{c_R }{2}(k_{\Omega_0}+B+n\alpha_i\sigma_i) & k_{\Omega_0}(1-n\alpha_i\sigma_i)-2c_R\overline\lambda
\end{bmatrix},\\
W_{q_i}=\begin{bmatrix}
c_q k_q & -\frac{c_qk_\omega}{2}\\
-\frac{c_qk_\omega}{2} & k_\omega-c_q\\
\end{bmatrix},\\
W_{xR_i}= \alpha_i\begin{bmatrix}
\gamma c_x k_{R_0}+\delta_i c_Rk_{x_0} 
& \gamma c_x k_{\Omega_0}+\delta_i k_{x_0}\\
\gamma k_{R_0}+\delta_ic_Rk_{\dot x_0}  &  
\gamma k_{\Omega_0} + \delta_i k_{\dot x_0}
\end{bmatrix},\\
W_{xq_i}= \begin{bmatrix}
c_x B & 0\\
\beta k_{x_0}e_{x_{\max}}+B & 0 
\end{bmatrix},\quad
W_{xR_i}= \begin{bmatrix}
c_R B & 0\\
\alpha_0\sigma_i k_{R_0} +B & 0 
\end{bmatrix}.
\end{gather*}
If the constants $c_x,c_R,c_q$ that are independent of the control input are sufficiently small, the matrices $W_{x_i},W_{R_i},W_{q_i}$ are positive-definite. Also, if the error in the direction of the link is sufficiently small relative to the desired trajectory, we can choose the controller gains such that the matrix $W_i$ is positive-definite, which follows that the zero equilibrium of tracking errors is exponentially stable.

\subsection{Proof of Proposition \ref{prop:FDM}}\label{sec:prfFDM}

This proof is based on singular perturbation~\cite{Kha96} and the attitude tracking control system developed in~\cite{LeeLeoPICDC10}. Let $\bar e_{R_i}=\frac{1}{\epsilon}e_{R_i}$. The error dynamics for $\bar e_{R_i},e_{\Omega_i}$ can be written as
\begin{align*}
\epsilon \dot{\bar e}_{R_i} & = \frac{1}{2}(\tr{R_i^T R_{c_i}}I-R_i^T R_{c_i}) e_{\Omega_i},\\
\epsilon \dot e_{\Omega_i} & = J_i^{-1}(-k_R \bar e_{R_i} -k_\Omega e_{\Omega_i}).
\end{align*}
The right-hand side of the above equations has an isolated root of $(\bar e_{R_i},e_{\Omega_i})=(0,0)$, and they correspond to the \textit{boundary-layer} system. And, the origin of the boundary-layer system is exponentially stable according to~\cite[Proposition 1]{LeeLeoPICDC10}.

More explicitly, define a configuration error function on $\SO$ as follows:
\begin{align*}
\Psi_R = \frac{1}{2}\tr{I- R_c^T R}.
\end{align*}
From now on, we drop the subscript $i$ for simplicity, as the subsequent development is identical for all quadrotors. Consider a domain given by $D_R=\{(R,\Omega)\in\SO\times\Re^3\,|\, \Psi_R < \psi_R < 2\}$. Define a Lyapunov function,
\begin{align*}
\mathcal{W} & = \frac{1}{2} e_\Omega\cdot J e_\Omega + \frac{k_R}{\epsilon^2} \Psi_R + \frac{c_3}{\epsilon} e_R\cdot e_\Omega,
\end{align*}
where $c_3$ is a positive constant satisfying
\begin{align*}
c_3 < \min \braces{ \sqrt{k_R\lambda_m(J)},\,\frac{4k_Rk_\Omega\lambda_{m}^2(J)}{k_\Omega^2\lambda_M(J)+4 k_R\lambda_m^2(J)}}.
\end{align*}
We can show that
\begin{align*}
\zeta^T L_1 \zeta \leq \mathcal{W} \leq \zeta^T L_2 \zeta,
\end{align*}
where $\zeta=[\|\bar e_R\|,\, \|e_\Omega\|]\in\Re^2$ and the matrices $L_1,L_2\in\Re^{2\times 2}$ are given by
\begin{align*}
L_1 = \begin{bmatrix} \frac{k_R}{2} & - \frac{c_3}{2} \\ -\frac{c_3}{2} & \frac{\lambda_m(J)}{2}\end{bmatrix},\quad
L_2 = \begin{bmatrix} \frac{k_R}{2-\psi_R} & \frac{c_3}{2} \\ \frac{c_3}{2} & \frac{\lambda_M(J)}{2}\end{bmatrix}.
\end{align*}
The time-derivative of $\mathcal{W}$ can be written as
\begin{align*}
\epsilon\dot{\mathcal{W}} & = (e_\Omega+ c_3 J^{-1}\bar e_R)\cdot (-k_R \bar e_R -k_\Omega e_\Omega)\\
&\quad + k_R \bar e_R\cdot e_\Omega + c_3 \dot e_R \cdot e_\Omega \leq - \zeta^T U \zeta,
\end{align*}
where the matrix $U\in\Re^{2\times 2}$ is 
\begin{align*}
U = \begin{bmatrix} \frac{c_3 k_R}{\lambda_M(J)} & -\frac{c_3 k_\Omega}{2\lambda_m(J)}\\
-\frac{c_3 k_\Omega}{2\lambda_m(J)} & k_\Omega-c_3
  \end{bmatrix}.
\end{align*}
The condition on $c_3$ guarantees that all of matrices $L_1,L_2,U$ are positive-definite. Therefore, the zero equilibrium of the tracking errors $(\bar e_R,e_\Omega)$ is exponentially stable, and the convergence rate is proportional to $\frac{1}{\epsilon}$. 

Next, we consider the \textit{reduced system}, which corresponds to the translational dynamics of the point mass and the rotational dynamics of the links when $R_i=R_{i_c}$. From \refeqn{fi} and \refeqn{b3i}, the control force of quadrotors when $R_i=R_{i_c}$ is given by
\begin{align*}
-f_i \cdot R_i e_3 = (u_i\cdot R_{c_i} e_3) R_{c_i} e_3 = 
(u_i\cdot -\frac{u_i}{\|u_i\|}) -\frac{u_i}{\|u_i\|} = u_i.
\end{align*}
Therefore, the reduced system is given by the controlled dynamics of the simplified model, and from Proposition \ref{prop:SDM}, its origin is exponentially stable. 

Then, according to Tikhonov's theorem~\cite[Thm 9.3]{Kha96}, there exists $\epsilon^* >0$ such that for all $\epsilon<\epsilon^*$, the origin of the full dynamics model is exponentially stable.

\bibliography{/Users/tylee@seas.gwu.edu/Documents/BibMaster}
\bibliographystyle{IEEEtran}

\end{document}